\documentclass[11pt,reqno]{amsart}
\usepackage{microtype}
  \usepackage[margin=4.1cm]{geometry}
\numberwithin{equation}{section}
\usepackage{amsmath, amsfonts,amsthm,amssymb,amscd, verbatim,graphicx,color,multirow,booktabs, caption,tikz,tikz-cd, mathdots,bm}
\usepackage{tikz-cd}
 \usepackage[pagebackref]{hyperref} 
\usetikzlibrary{positioning} 

\newtheorem{theorem}{Theorem}[section]
\newtheorem{lemma}[theorem]{Lemma}

\newtheorem{proposition}[theorem]{Proposition}
 \newtheorem{corollary}[theorem]{Corollary}

       \newtheorem*{conjecture}{Conjecture}

      \theoremstyle{definition}
     \newtheorem*{definition}{Definition}
     \newtheorem{example}[theorem]{Example}
     
     \theoremstyle{remark}
     \newtheorem{remark}[theorem]{Remark}

\newcommand{\Sym}{\mathop{\mathrm{Sym}}}
\newcommand{\Alt}{\mathop{\mathrm{Alt}}}
\newcommand{\Aut}{\mathop{\mathrm{Aut}}}

\newcommand{\GL}{\mathop{\mathrm{GL}}}

\newcommand{\SL}{\mathop{\mathrm{SL}}}
\newcommand{\SLL}{\mathop{\mathrm{SL_2}}}

\newcommand{\Sol}{\mathop{\mathrm{Sol}}}
\newcommand{\diam}{\mathop{\mathrm{diam}}}

\newcommand{\Kaz}{\mathop{\mathrm{Kaz}}}
\newcommand{\Kazz}{\mathop{\mathrm{Kaz^*}}}
\newcommand{\Cay}{\mathop{\mathrm{Cay}}}
\newcommand{\gap}{\mathop{\mathrm{gap}}}
\newcommand{\Irr}{\mathop{\mathrm{Irr}}}
\newcommand{\End}{\mathop{\mathrm{End}}} 
\newcommand{\Prob}{\mathop{\mathrm{Prob}}}

 \definecolor{mygold}{rgb}{0.55,0.35,0.0}
 \definecolor{mycolor}{rgb}{0.55,0.0,0.16}
  \definecolor{myred}{rgb}{0.75,0.0,0.16} 
  \definecolor{mygreen}{rgb}{0.0,0.4,0.16} 
  \definecolor{myviolet}{rgb}{1,0,1} 
   \definecolor{mypink}{rgb}{0.9,0,0.5}
	
	\newcommand{\MYhref}[3][black]{\href{#2}{\color{#1}{#3}}}%

\hypersetup{
colorlinks=true,
linkcolor=true,
linktocpage=true,
pageanchor=true,
hyperindex=true
}

 \AtBeginDocument{
     \hypersetup{
  linkcolor=mygold,
  urlcolor=mypink,
citecolor=mygreen
}
     }

\makeatletter
\@namedef{subjclassname@2020}{%
  \textup{2020} Mathematics Subject Classification}
\makeatother

\subjclass[2020]{Primary: 20F69, 05C48} 
\keywords{Cayley graph, expander graph, quasirandom group} 

\author[Luca Sabatini]{Luca Sabatini}
\address{\parbox{\linewidth}{Luca Sabatini, Mathematics Institute, University of Warwick\\
Coventry CV4\,7AL, United Kingdom \vspace{0.1cm}}}
\email{luca.sabatini@warwick.ac.uk, sabatini.math@gmail.com} 

\begin{document} 
 \title[Cayley graphs of quasirandom groups]{Cayley graphs of quasirandom groups} 

\maketitle 

\begin{abstract} 
A finite group $G$ is $\varepsilon$-quasirandom
if all its nontrivial irreducible complex representations
have degree at least $|G|^\varepsilon$.
Building on recent work of Golsefidy--Srinivas,
we prove that expansion in a quasirandom group is controlled by
expansion in its simple quotients.
As a consequence, we remove the product theorem from the hypotheses of
the Bourgain--Gamburd expansion machine.
Moreover, we combine this result with crown theory to deduce that $1 + \lfloor \varepsilon^{-1} \rfloor$
random elements give an expander Cayley graph with high probability.
Finally, generalizing results of Breuillard--Green--Tao and Pyber--Szab\'o,
we prove that the diameter of any connected Cayley graph of a quasirandom group is polylogarithmic.
\end{abstract} 


\vspace{0.2cm}
\section{Introduction} 

Expander graphs are highly connected sparse graphs and play a fundamental role
in various areas of mathematics and computer science \cite{HLW06,Lub12}.
Among their remarkable properties,
there is the fact that their diameter is logarithmic in the number of vertices.

Several notable constructions of expanders involve Cayley graphs.
In a spectacular breakthrough, Bourgain and Gamburd \cite{BG08} devised an ingenious method,
starting from the so-called Sarnak--Xue trick \cite{SX91},
to show that $2$ random elements in $\SL_2(p)$ give an expander Cayley graph with high probability.
Using the same strategy,
the companion paper of Breuillard--Green--Guralnick--Tao \cite{BGGT15} generalized their theorem
to all finite simple groups of Lie type of bounded rank. 
There are also weaker results that have the advantage of being valid for literally all generating sets.
In a parallel development,
Helfgott \cite{Hel08} proved that the diameter of every connected Cayley graph of $\SL_2(p)$ is polylogarithmic,
while Breuillard--Green--Tao \cite{BGT11} and Pyber--Szab\'o \cite{PS16} generalized
his theorem to all finite simple groups of Lie type of bounded rank.

The following property of finite simple groups of bounded Lie rank was first isolated by Gowers \cite{Gow08},
and plays a crucial role in the proofs of these results:

 \begin{definition}
Let $\varepsilon>0$.
A finite group $G$ is $\varepsilon$\emph{-quasirandom} if every nontrivial irreducible complex representation of $G$
has degree at least $|G|^\varepsilon$.
\end{definition} 

We are interested in this property when $\varepsilon$ is fixed and $G$ is large,
so we simply refer to \emph{quasirandom group(s)} in this case.
Quasirandom groups have several characterizations \cite{BS25,NP11},
and in the context of expansion and diameter bounds,
they can be thought of as the groups where the ``tricks''
of Sarnak--Xue \cite{SX91} and Gowers \cite{NP11} are available efficiently.
It is natural to ask:
\emph{how far quasirandomness alone drives expansion phenomena?}
The present paper answers this question completely.

Our first main result builds on the recent work of Golsefidy and Srinivas \cite{GS24,GS25}.
They study group extensions
$$ 1 \to N \hookrightarrow G \twoheadrightarrow Q \to 1 $$
where $Q$ is quasirandom.
In particular, they give hypotheses under which expansion in $\Cay(G,S)$
follows from expansion in $\Cay(G/N,S)$
(in fact, they first prove that such hypotheses induce quasirandomness to $G$ itself).
For instance, they require $G$ to be a specific direct product of $N$ and $Q$,
or $N$ to be small and abelian, or nilpotent of bounded class,
together with suitable technical conditions on the conjugation action of $G$ on $N$.
We show that no additional hypothesis is needed when $G$ is quasirandom:
expansion in quasirandom groups is always controlled by expansion in the simple quotients.

\begin{theorem} \label{thProjection}
Let $G$ be $\varepsilon$-quasirandom.
If $G=\langle S \rangle$, then
$$ \gap(G,S) \> \gg_{\varepsilon,|S|} \> 
\min \{ \gap(G/N,S)  : N~\text{a maximal normal subgroup of}~G\} . $$
\end{theorem}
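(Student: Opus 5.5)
We plan to argue by induction on the composition length of $G$, peeling off one minimal normal subgroup at a time, and to use quasirandomness to bound both the number of steps and the loss in the spectral gap at each step. First, $\varepsilon$-quasirandomness forces $G$ to be perfect, and every quotient of $G$ is again $\varepsilon'$-quasirandom for some $\varepsilon'$ depending only on $\varepsilon$. (Indeed, a nontrivial irreducible representation of $G/N$ is one of $G$, so its degree is at least $|G|^\varepsilon \ge |G/N|^\varepsilon$; hence quotients are even $\varepsilon$-quasirandom.) Moreover, each chief factor $M/K$ of $G$ is a power $T^k$ of a nonabelian simple group $T$, because $G$ has no abelian quotients and a solvable chief factor yields a low-degree representation via a standard counting argument. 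More precisely, we will show that the number of nonabelian chief factors, counted with multiplicity, is $O_\varepsilon(1)$, and that abelian chief factors cannot be too large relative to $|G|$.

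The second step is a \emph{one-step lifting lemma}. Let $N$ be a minimal normal subgroup of $G$, so that $G/N$ is quasirandom. We want
\[
\gap(G,S) \gg_{\varepsilon,|S|} \min\{\gap(G/N,S),\ \gap_N(G,S)\},
\]
where $\gap_N$ denotes the gap restricted to $L^2(G)$ orthogonal to $N$-invariant functions. Representations of $G$ split into those trivial on $N$, which are controlled by $\gap(G/N,S)$, and those nontrivial on $N$. For the latter we follow Golsefidy--Srinivas: the spectral gap on these representations is controlled via Sarnak--Xue multiplicity counting together with an escape-from-subgroups argument inside $N$. Quasirandomness of $G$ itself, rather than of a specific extension, supplies the large-degree lower bound. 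The case split is as follows. If $N=T^k$ is nonabelian, the $G$-conjugation action permutes the factors transitively, so $G$ maps to a subgroup of $\Aut(T)\wr \Sym(k)$. By quasirandomness $k$ is bounded, which reduces expansion on the $N$-nontrivial part to expansion in the quotient $G/C_G(N)$. That quotient is almost simple-by-bounded, and its simple quotient is one of the $G/M$ with $M$ maximal normal. If $N$ is abelian, it is an irreducible $\mathbb{F}_p[G]$-module, and we use Golsefidy--Srinivas's abelian-kernel result. The required technical hypothesis, that no nontrivial character of $N$ has a small orbit, follows because a $G$-orbit of characters of size $m$ induces an irreducible representation of dimension related to $m$, and quasirandomness forces $m \ge |G|^{\varepsilon}$.

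Third, we iterate along a chief series $1=N_0<N_1<\dots<N_r=G$. Since each step loses only a constant factor depending on $(\varepsilon,|S|)$, we need $r$ bounded, or at least a loss that does not compound. We expect this to be the main obstacle. Abelian chief factors can be numerous, for example iterated modules over $\SL_2(p)$ with unbounded length in $p$-groups of class growing. To handle this, we would group consecutive abelian factors into the solvable radical $R$ of $G$ (more precisely, the largest solvable normal subgroup between nonabelian layers). We would then treat the whole extension $R\to G\to G/R$ in a single step, using that $G$ is quasirandom so that every nontrivial irreducible representation of $G$ that is nontrivial on $R$ still has degree at least $|G|^\varepsilon$. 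The Bourgain--Gamburd-style flattening in $L^2$ then only needs escape from the proper subgroups $H$ with $HR\ne G$, and this escape is obtained from the expansion of $G/R$.

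The nonabelian layers are bounded in number by the previous paragraph. This gives a bounded number of steps, each costing a constant factor depending on $(\varepsilon,|S|)$, and yields the statement of Theorem~\ref{thProjection}.
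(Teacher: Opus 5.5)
Your skeleton (peel off the solvable radical, then handle the semisimple top) matches the paper's, but the steps that carry the weight do not hold as stated. The first gap is central extensions. You never treat them, and your abelian step breaks down there. If $N\le Z(G)$ then $G$ fixes every character of $N$, so all orbits are singletons and your claim that quasirandomness forces orbit size $m\ge|G|^\varepsilon$ fails. Likewise the hypothesis of Theorem~\ref{thGSAbe}, namely $\langle a^G\rangle=((a^{\pm1})^G)^{c}$ with $c$ bounded, fails once $N$ is large (in Example~\ref{exQ}(d), $|Z(G)|=p$). Such factors do occur. Indeed, your opening claim that all chief factors are nonabelian is false: take $\{\pm1\}\lhd\SL_2(p)$, or the socle of $(\mathbb{F}_p)^2\rtimes\SL_2(p)$. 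The paper needs a separate idea here, Lemma~\ref{lemKazZ}. For a nontrivial irreducible $\pi$ of degree $n\ge2$, central elements act by scalars, so $\pi\otimes\overline{\pi}$ factors through $G/Z(G)$. Moreover $v\otimes\overline{v}$ has squared mass only $1/n\le 1/2$ on the unique invariant line. This gives $\Kazz(G,S)\ge\frac14\Kaz(G/Z(G),S)$ and removes the whole center in one step (Corollary~\ref{corExpC}). At non-central abelian steps, the bounded-width hypothesis of Theorem~\ref{thGSAbe} does not come from orbit sizes of characters. The paper derives it from the affine conjugating trick (Lemma~\ref{lemACT}, which uses $d(G)\le 1+\varepsilon^{-1}$ from Lemma~\ref{lemQGen}).

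The second gap is your remedy for the number of steps. Treating $\Sol(G)$ in one stroke by Bourgain--Gamburd flattening requires a product theorem in $G$, which is not available for general quasirandom groups. Growth already fails in $\SL_2(p)\times\SL_2(p)$, and removing that stage is Corollary~\ref{corBGFinal}, a consequence of the very theorem you are proving. Moreover, non-concentration on subgroups $H$ with $HR=G$, such as the complements $\SL_2(p)$ in $(\mathbb{F}_p)^2\rtimes\SL_2(p)$, cannot be read off from $G/R$. No such step is needed. Non-central abelian chief factors are large, of order $>|G|^\varepsilon$ by Corollary~\ref{corQChief}, and by Gr\"un's lemma (Lemma~\ref{lemGrun}) $G/Z(G)$ has trivial center. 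So alternately killing the center and a (necessarily non-central) minimal abelian normal subgroup exhausts $\Sol(G)$ in fewer than $\varepsilon^{-1}$ steps.

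Finally, your nonabelian step is really a direct-product problem. For large $G$, quasirandomness forces $k=1$ and $G=N\times C_G(N)$. The $N$-nontrivial irreducibles are then $\rho\otimes\sigma$ with $\sigma$ an arbitrary irreducible of $C_G(N)$. Nothing in your sketch explains how expansion of $G/C_G(N)\cong N$ alone controls these, and this is exactly where diagonal-type phenomena live. The paper instead uses Theorem~\ref{thBS} to write $G/\Sol(G)=\prod_i T_i$ and iterates Theorem~\ref{thGSDirPro}. That theorem requires bounded-width generation by conjugacy classes in each $T_i$ (Liebeck--Shalev) and that $S$ generate the whole product.
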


Here $\gap(G,S)$ denotes the spectral gap of the Cayley graph.
It is remarkable that Theorem~\ref{thProjection} fails dramatically for non-quasirandom groups:
a trivial example is a large cyclic $2$-group,
but we refer the reader to Example~\ref{exBad} for a decisive example with a large unique simple quotient.

The proof of Theorem~\ref{thProjection} does not require a delicate analysis of random walks as in \cite{GS24,GS25}.
Instead, we use some of their results as a black box and proceed with a different strategy.
A fundamental observation is that, by Gr\"un's lemma on perfect groups,
we can implement a process where at each step we quotient the ambient group by its center
and then by a non-central minimal abelian normal subgroup.
This process stops when the solvable radical is trivial,
and it lasts at most $\varepsilon^{-1}$ steps by quasirandomness.
To handle the non-central steps we combine a theorem in \cite{GS25} with
the ``affine conjugating trick'' by Eberhard--Murphy--Pyber--Szab\'o \cite{EMPS25}.
The case of central extensions requires a separate argument that relies on Kazhdan constants
(Lemma~\ref{lemKazZ}).
When the solvable radical is trivial, we actually have a direct product of simple groups by a result
of Barbieri and the author \cite{BS25}.
Finally, this situation can be handled by an iterative application of another theorem in \cite{GS25}.\\

Theorem~\ref{thProjection} has some applications that we now describe.
To start, Theorems~C, D and E in \cite{GS25} are not used in our proof
and in fact can be considered special cases.
Notably, it also allows us to remove the product theorem (Stage 2)
from the hypotheses in the Bourgain--Gamburd expansion machine \cite[Theorem~1.4.2]{Tao15}
(see Corollary~\ref{corBGFinal} below).
With some more work, we prove the following definitive version of Bourgain--Gamburd-type results:

\begin{theorem} \label{thExpRan}
If $G$ is $\varepsilon$-quasirandom,
 then $1 + \lfloor \varepsilon^{-1} \rfloor$ random elements give an expander Cayley graph with high probability.
 \end{theorem}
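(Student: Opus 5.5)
Theorem~\ref{thProjection} reduces the problem to the simple quotients. Let $k = 1+\lfloor \varepsilon^{-1}\rfloor$ and let $x_1,\dots,x_k$ be independent uniform elements of $G$, with $S=\{x_1,\dots,x_k\}$. The plan has three steps:
\begin{enumerate}
\item show that $S$ generates $G$ with high probability;
\item show that, with high probability, for every maximal normal subgroup $N$ the image of $S$ gives an expander on $G/N$;
\item apply Theorem~\ref{thProjection} with $|S|=k$ fixed, which gives $\gap(G,S)\gg_\varepsilon 1$ with high probability.
\end{enumerate}
Here ``with high probability'' means with probability $1-o(1)$ as $|G|\to\infty$. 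Quasirandomness forces $|G|\to\infty$ to mean that all the relevant simple quotients are large.

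\textbf{Structure of the simple quotients.} Since $G$ is perfect, each $G/N$ is a nonabelian simple group $T$. Every nontrivial irreducible representation of $T$ inflates to $G$, so $T$ is $\varepsilon$-quasirandom relative to $|G|$. In particular $|T|\geq |G|^{\varepsilon}$ and $T$ has minimal degree at least $|G|^\varepsilon$.

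This excludes alternating groups of large degree, and also sporadic groups once $|G|$ is large. So every $T$ is of Lie type of bounded rank, with $|T|\geq|G|^\varepsilon$. For such $T$, Breuillard--Green--Guralnick--Tao give that a random pair, and hence a random $k$-tuple with $k\geq 2$, is expanding with probability $1-|T|^{-c}$. Their proof gives the uniform-in-$T$ failure bound needed here; I would take it from there rather than reprove it.

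\textbf{Union bound over quotients.} Group the maximal normal subgroups by isomorphism type $T$. If $G/N_1\cong\dots\cong G/N_m\cong T$ with the $N_i$ distinct, then $G$ surjects onto $T^m$. Hence $m\leq \log|G|/\log|T|\leq \varepsilon^{-1}$, up to the factor coming from $|T|\geq|G|^\varepsilon$.

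The number of isomorphism types is also bounded, since $\prod_T |T|\leq |G|$ and each $|T|\geq |G|^\varepsilon$. So there are $O_\varepsilon(1)$ maximal normal subgroups, and a union bound of $O_\varepsilon(|G|^{-c\varepsilon})$ handles step~2.

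\textbf{Generation (step~1) and crown theory.} This is where the exact count $1+\lfloor\varepsilon^{-1}\rfloor$ enters, and it is the main obstacle. I would use crown theory, in the form of the Gasch\"utz and Dalla~Volta--Lucchini results: the probability that $k$ random elements generate $G$ is controlled by the chief factors and their crowns.

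For nonabelian chief factors, the quantity to bound is the probability that $k$ random elements fail to generate modulo a crown $T^m$. This is governed by $m$ against $|\mathrm{Out}|$ times the number of generating $k$-tuples of $T$. Since $k\geq2$, random $k$-tuples generate $T$ with probability $1-o(1)$ (Liebeck--Shalev). With $m$ bounded, this failure probability is $o(1)$.

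For abelian chief factors $V$, the crown-based bound says that failure modulo $V$ has probability roughly $|V|^{\delta}\,|V|^{-k}$. Here $\delta$ is the multiplicity of $V$ as a non-Frattini chief factor, and the bound also involves $|\mathrm{End}_G(V)|$. Since $G$ is perfect, $V$ is a nontrivial $G$-module.

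Quasirandomness gives $\dim V\geq |G|^\varepsilon$ over $\mathbb{F}_p$: the dual action affords a nontrivial representation of $G$, whose complex lifting I would justify via Brauer characters. On the other hand $|V|^{\delta}\leq|G|$. Together these give $\delta\leq \varepsilon^{-1}\log|G|/\log|V|$. I would aim to sharpen this to $\delta\leq\lfloor\varepsilon^{-1}\rfloor$, using that $|V|^\delta$ divides $|G|$ and $|V|\geq p^{|G|^\varepsilon}$, together with a Gasch\"utz count. Then $k=\delta+1$ generators succeed modulo the crown with probability $1-O(|V|^{-1})$.

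The delicate points are controlling the number of abelian crowns and getting exactly $\lfloor\varepsilon^{-1}\rfloor+1$ rather than $O(\varepsilon^{-1})$. This is the step I expect to require the most care.
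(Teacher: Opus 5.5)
\textbf{Steps 2 and 3 are fine.} They are exactly how the paper finishes: Proposition~\ref{propRanGen}, then Theorems~\ref{thProjection} and~\ref{thBGGT}, using that there are boundedly many simple quotients, each of order at least $|G|^\varepsilon$. So everything rests on step~1, and there your argument contains a false claim and omits the key quantitative input.

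\textbf{The false claim.} The assertion $\dim_{\mathbb{F}_p}V\geq|G|^\varepsilon$ is wrong: modular representations need not lift to complex ones of the same degree. The group $(\mathbb{F}_p)^2\rtimes\SL_2(p)$ of Example~\ref{exQ}(b) is quasirandom and has a $2$-dimensional chief factor. Your claim would even force $|V|\geq 2^{|G|^\varepsilon}>|G|$, i.e.\ no abelian chief factors at all. What is true, and sufficient, is $|V|>|G|^\varepsilon$, because $G$ acts nontrivially on the \emph{set} $V$ (Corollary~\ref{corQChief}). Together with $|V|^\delta\leq|G|$ this gives $\delta<\varepsilon^{-1}$ at once, so $t:=1+\lfloor\varepsilon^{-1}\rfloor\geq\delta+1$ and no sharpening is needed. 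Moreover, in a perfect group every non-Frattini chief factor is non-central (Remark~\ref{remZF} applied to quotients). So the same bound shows there are fewer than $\varepsilon^{-1}$ of them, and controlling the number of crowns is not delicate either.

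\textbf{The genuine gap.} It is the per-crown estimate ``failure roughly $|V|^{\delta}|V|^{-k}$'', which drops exactly the terms that matter. In the Detomi--Lucchini factorization (Theorem~\ref{thDL}), the level-$i$ factor is $P_{L,A}(t)-(1+q_A+\cdots+q_A^{i-2})\gamma_A/|A|^t$, where $\gamma_A=|A|\,|\End_L(A)^\times|\,|H^1(L/A,A)|$. In the abelian case, a complement count gives $1-P_{L,A}(t)=|A|\,|H^1(L/A,A)|/|A|^t$. So you cannot avoid bounding $H^1(L/A,A)$ and $q_A=|\End_L(A)|$. The elementary bound $|H^1(L/A,A)|\leq|A|^{d(L/A)-1}$, with $d(L/A)$ possibly of size about $\varepsilon^{-1}$, can swallow the whole $|A|^{-t}$ saving, and then nothing tends to $0$.

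\textbf{What the paper supplies.} Lemma~\ref{lemComp} gives both bounds:
\begin{itemize}
\item $|H^1(L/A,A)|\leq|A|^{1/2}$, by Guralnick--Hoffman;
\item $q_A\leq|A|^{1/2}$, because $\End_L(A)\cong\mathbb{F}_q$ by Schur, and $\dim_{\mathbb{F}_q}A=1$ would embed the perfect group $L/A$ in $\mathbb{F}_q^\times$.
\end{itemize}
Combined with Lucchini--Morini (Theorem~\ref{thLM}), which applies since $t\geq d(L_A)$ by Lemma~\ref{lemQGen}, this makes every factor $1-o(1)$ uniformly. Without inputs of this kind, step~1 is not established, and that is precisely the step that produces the constant $1+\lfloor\varepsilon^{-1}\rfloor$.
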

 
The linear dependence on $\varepsilon^{-1}$ is necessary,
simply because an $\varepsilon$-quasirandom group
may require so many elements to be generated (see Example~\ref{exQ}(c)).
In fact, to obtain Theorem~\ref{thExpRan} from Theorem~\ref{thProjection} and \cite{BGGT15},
it is sufficient to find a generating set.
For this purpose,
we use results of Detomi--Lucchini \cite{DL03} and Lucchini--Morini \cite{LM02} from crown theory
to show that $1 + \lfloor \varepsilon^{-1} \rfloor$ random elements generate $G$ with high probability.

A different application of Theorem~\ref{thProjection} concerns uniform expansion.
The following deep conjecture of Breuillard is reported as ``folklore'' in \cite[Conjecture~4.12]{Bre14}:
\emph{If $G$ is a finite simple group of Lie type of bounded rank,
then all connected Cayley graphs of $G$ are uniform expanders.}
Progress on this problem was made by Breuillard and Gamburd \cite{BG10}
(see also Becker--Breuillard \cite{BB26}), but the general conjecture remains open.
By Theorem~\ref{thProjection}, at least for sets of bounded cardinality,
Breuillard's conjecture is equivalent to a statement for all quasirandom groups.
We push this analogy further and propose the following conjecture for all generating sets:

\begin{conjecture} \label{conjUnif}
If $G$ is $\varepsilon$-quasirandom,
then all connected Cayley graphs of $G$ are $\varepsilon$-uniform expanders.
\end{conjecture}

This appears to be a very difficult problem.
Using our methods we are able to prove a weaker, unconditional uniform result concerning diameters.
Let us write $\diam(G)$ for the maximum diameter over all connected Cayley graphs of $G$.

\begin{theorem} \label{thDiam} 
If $G$ is $\varepsilon$-quasirandom,
then $\diam(G) \leq (\log|G|)^{O_\varepsilon(1)}$.
\end{theorem}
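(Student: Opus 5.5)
We follow the same reduction as for Theorem~\ref{thProjection}, but at the level of diameters and growth of sets rather than spectral gaps, since $S$ now has unbounded size. Let $G=\langle S\rangle$ be $\varepsilon$-quasirandom. We peel off the solvable radical in at most $\varepsilon^{-1}$ steps, each step being either a quotient by the center or a quotient by a non-central minimal abelian normal subgroup; this is the Gr\"un-lemma process already used for Theorem~\ref{thProjection}. When the radical is trivial, \cite{BS25} gives $G/R(G)\cong T_1\times\cdots\times T_k$, a direct product of nonabelian simple groups. Each $T_i$ is itself quasirandom (it is a quotient of $G$), hence of Lie type of rank $O_\varepsilon(1)$, with $k\le \varepsilon^{-1}$. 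So we need three ingredients.
\begin{itemize}
\item[(a)] A polylogarithmic diameter bound for the top $T_1\times\cdots\times T_k$.
\item[(b)] A bound on how the diameter grows when we pass from $G/N$ to $G$ for each type of layer.
\item[(c)] A check that the total number of layers is $O_\varepsilon(1)$.
\end{itemize}
Composing these multiplies the exponents boundedly many times, giving $(\log|G|)^{O_\varepsilon(1)}$.

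For (a) we use \cite{BGT11,PS16}: $\diam(T_i)\le(\log|T_i|)^{O_\varepsilon(1)}$. For the product we use a Goursat-type argument. Given $S$ generating $T_1\times\cdots\times T_k$, short words in $S$ produce an element whose projection to $T_1$ is nontrivial, but which is trivial in some other coordinate, as soon as $k\ge 2$. This is done with commutators of words that agree in one coordinate. Repeated commutators of such elements with short words then generate the factor $T_i$ with polylogarithmic word length, by Helfgott-type growth inside $T_i$ (the product theorem). Each step costs a polynomial in $\log|G|$, and there are $k\le\varepsilon^{-1}$ factors, so the total is still polylogarithmic.

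For (b), a central layer $Z\le Z(G)$ is handled with the perfectness of $G$ (quasirandom groups are perfect). Every element of $Z$ is a product of boundedly many commutators of $G$, by Gowers' trick or Nikolov--Pyber. Writing each commutator entry as a short word modulo $Z$ and noting that commutators of lifts are independent of the chosen lift up to central correction gives
$$\diam(G)\ll_\varepsilon \diam(G/Z)\cdot O(1).$$
For a non-central minimal abelian $N\cong\mathbb{F}_p^d$ we use the affine conjugation trick of \cite{EMPS25}. Take a short word $n\in N$ that is nontrivial; one exists because some relation in $G/N$ of length $O(\diam(G/N))$ fails to hold in $G$. Conjugates of $n$ by words of length $\le\diam(G/N)$ span $N$ as an $\mathbb{F}_p[G]$-module, since the action is irreducible. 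Quasirandomness forces $p^d\ge |G/C_G(N)|^{\Omega(1)}$, and a sum-product / Gowers mixing argument shows that $O_\varepsilon(1)$-fold sums of such conjugate orbits already cover $N$.

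We expect the main obstacle to be this non-central affine step. Plain linear algebra would give length $d\cdot\diam(G/N)$, which is useless when $d$ is large. To keep the cost polynomial we will instead use that the set $X$ of conjugates of $n$ has size at least $|G/C_G(N)|^{\varepsilon}$ and is invariant under the quasirandom group $G/C_G(N)$. The Gowers trick then shows that $X+X+X$, iterated $O(\varepsilon^{-1})$ times, equals $N$, as in \cite{EMPS25}.
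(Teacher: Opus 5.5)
Your outline is correct and uses the same three ingredients as the paper. These are the affine conjugating trick (Lemma~\ref{lemACT}) for non-central abelian layers, bounded commutator width (Lemma~\ref{lemWid}) together with $[xz,yz']=[x,y]$ for central layers, and \cite{BGT11,PS16} for the semisimple top. What differs is how you cut $G$ into layers.

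The paper inducts on the derived length of $\Sol(G)$, which is at most $2\varepsilon^{-1}$ by Proposition~\ref{propDL}. At each step it takes the whole abelian layer $A=R^{(\ell-1)}$ and covers $[A,G]$ by applying Lemma~\ref{lemACT} to the normal closure of the Schreier generators of $A$. It then handles $A/[A,G]$, which is central in $G/[A,G]$, by commutator width. You instead run the Gr\"un process: quotient by the center, then by a minimal normal subgroup $N$. This gives $[N,G]=N$, so a single short nontrivial element suffices, and the number of layers is bounded by Corollary~\ref{corQChief}. The two routes are interchangeable. In both, each layer costs only a multiplicative factor $O_\varepsilon(1)$, so either actually yields $\diam(G)\ll_\varepsilon\diam(G/\Sol(G))$. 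The paper's version does not need to isolate minimal normal subgroups; yours does not need Proposition~\ref{propDL}.

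Three points to tighten.

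\begin{enumerate}
\item The lemma of \cite{EMPS25} depends on the number of generators of the acting group. The version in Lemma~\ref{lemACT} uses $d(G)\le 1+\varepsilon^{-1}$ from Lemma~\ref{lemQGen}, and the paper proves that lemma for exactly this purpose. Your claim that $O_\varepsilon(1)$-fold sums of the orbit cover $N$ needs this input, so you should invoke it explicitly.

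\item The top needs neither a Goursat-type argument nor the product theorem. Lemma~\ref{lemDiamTools}(iii) gives a generating set of the kernel $T_1$ of $T_1\times\cdots\times T_k\to T_2\times\cdots\times T_k$. Its $S$-length is at most $2\diam(T_2\times\cdots\times T_k)+1$. The bound of \cite{BGT11,PS16} holds for every generating set of $T_1$, so induction on $k$ gives $\diam(G)\le\prod_i(2\diam(T_i)+1)$.

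\item Bounded commutator width is true, but it is not an immediate consequence of Gowers's trick, since you have no large lower bound on the size of the set of commutators. The clean proof is the character criterion $\sum_{\chi}\chi(g)/\chi(1)^{2k-1}>0$, as in Lemma~\ref{lemWid}.
\end{enumerate}
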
 

This extends the aforementioned
theorems of Breuillard--Green--Tao \cite{BGT11} and Pyber--Szab\'o \cite{PS16}.
In the recent paper \cite{EMST26},
the author together with S.~Eberhard, E.~Maini and G.~Tracey
proved a general-purpose polylog-type diameter bound for arbitrary finite groups,
but a different argument is required to obtain Theorem~\ref{thDiam}.
A new ingredient
is that every element of an $\varepsilon$-quasirandom group
is the product of at most $\varepsilon^{-1}$ commutators (Lemma~\ref{lemWid}).\\

The paper is organized as follows.
In Section~\ref{secPre} we present the necessary background.
Moreover, we prove that the solvable radical of an $\varepsilon$-quasirandom group
has $\varepsilon$-bounded derived length
(in fact the proof of this result inspired the multi-step process that we have described above).
In Section~\ref{secGen} we show that
$1+\lfloor \varepsilon^{-1} \rfloor$ random elements generate an $\varepsilon$-quasirandom group with high probability.
It is important to do so here, in order to apply the affine conjugating trick efficiently in later stages.
In Section~\ref{secDia} we prove Theorem~\ref{thDiam},
and in Section~\ref{secExp} we deal with central extensions and complete the proofs of
Theorems~\ref{thProjection} and \ref{thExpRan}.

\vspace{0.1cm} \noindent
{\bfseries Notation.}
Given two positive functions $f$ and $g$,
by $f \ll g$ and $f = O(g)$ we mean the same thing,
namely that there exists a constant $C$ such that $f \leq C g$.
The constant $C$ may depend on one or more parameters which we indicate in subscript.

\vspace{0.1cm}
\section{Preliminaries} \label{secPre}

\subsection{Quasirandom groups} 

Every group in this paper is finite.
A finite  group is $\varepsilon$-quasirandom for some $\varepsilon >0$ if and only if it is perfect.
It is known that there is no $0.35$-quasirandom group,
and that the only $\tfrac{1}{3}$-quasirandom groups are two sporadic simple groups \cite[Corollary~D]{BS25}.
In particular, in the relevant cases we can always assume $\varepsilon < 1/3$.
We say that a group (or more precisely a sequence of groups)
is \emph{quasirandom} to mean that it is $\varepsilon$-quasirandom for a fixed $\varepsilon >0$.

It follows from the definition that if $G$ is $\varepsilon$-quasirandom and $N$ is a proper normal subgroup of $G$,
then $|G/N| \geq |G|^\varepsilon$ and $G/N$ is again $\varepsilon$-quasirandom
(this fact will be used without further mention).
We now report an elegant characterization of quasirandom groups with respect to group actions,
which is due to Nikolov and Pyber \cite{NP11}.

\begin{lemma} \label{lemQSet}
Let $G$ be a finite group.
If $G$ is $\varepsilon$-quasirandom and acts non-trivially on a finite set $\Omega$,
then $|\Omega| > |G|^\varepsilon$.

Conversely, if for all nontrivial actions of $G$ on a finite set $\Omega$ we have $|\Omega| \geq |G|^\varepsilon$,
then $G$ is $\varepsilon'$-quasirandom for some $\varepsilon'(\varepsilon) >0$.
\end{lemma}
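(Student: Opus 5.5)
The first direction follows from the permutation representation. I will make precise what ``acts non-trivially'' gives and then treat the converse.

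\emph{First direction.} Suppose $G$ is $\varepsilon$-quasirandom and acts non-trivially on $\Omega$. Consider the permutation representation $\mathbb{C}^\Omega$, of dimension $|\Omega|$. It decomposes into irreducible representations of $G$. Since the action is non-trivial, some $g \in G$ acts as a nontrivial permutation matrix. Hence the representation is not a direct sum of trivial representations, so at least one nontrivial irreducible constituent appears.

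The constant functions span a copy of the trivial representation inside $\mathbb{C}^\Omega$. Therefore the nontrivial constituent lives in the complement, which has dimension $|\Omega|-1$. Quasirandomness then gives
$$ |G|^\varepsilon \le |\Omega| - 1 < |\Omega| ,$$
which is the claimed strict inequality.

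\emph{Converse.} Let $\rho$ be a nontrivial irreducible representation of degree $d$. The goal is a nontrivial action of $G$ on a set of size at most $d^{O(1)}$, or a contradiction. The hypothesis gives a lower bound on every nontrivial quotient: if $N \ne G$ is normal, then $G$ acts non-trivially on $G/N$, so $|G/N| \ge |G|^\varepsilon$. Also $G$ is perfect, since otherwise it has a nontrivial abelian, hence cyclic of prime order, quotient $C_p$ with $p \ge |G|^\varepsilon$. I will just pass to $\rho(G)$, which is a quotient $\bar G = G/\ker\rho$, a finite subgroup of $\GL_d(\mathbb{C})$ of order at least $|G|^\varepsilon$.

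The main step is Jordan's theorem. It says $\bar G$ has an abelian normal subgroup $A$ of index at most $f(d) \le (d+1)!^{O(1)}$, or with Collins' bound $(d+1)!$ for large $d$. Then $G$ acts on the cosets of the preimage of $A$, a set of size at most $f(d)$.

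\emph{Case 1: the action is non-trivial.} Then $f(d) \ge |G|^\varepsilon$. This gives $d\log d \gg \varepsilon \log|G|$, which is only $d \ge |G|^{\varepsilon'}$ up to a log factor. That loss is the main obstacle: factorial is not polynomial.

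\emph{Case 2: the action is trivial.} Then $\bar G = A$ is abelian. Since $G$ is perfect, this forces $\bar G = 1$, contradicting the nontriviality of $\rho$.

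\emph{Repairing the log factor.} To get a genuine power, I would instead use the sharper structural version of Jordan's theorem, due to Weisfeiler and Collins via the classification. It says a finite $\bar G \le \GL_d(\mathbb{C})$ has a normal abelian subgroup of index at most $(d+2)!$, and the quotient is controlled by alternating groups of degree $\le d+1$ and Lie-type groups with polynomially bounded minimal actions. Concretely, I would pick a minimal normal subgroup of a nontrivial quotient of $\bar G$ and find a nontrivial action on at most $d^{C}$ points, via:
\begin{itemize}
\item the natural action of $\mathrm{Alt}_m$ on $m \le d+1$ points;
\item the projective-space actions for Lie type, using the Landazuri--Seitz lower bounds $d \ge q^{cr}$ versus the action degree $\le q^{Cr}$.
\end{itemize}
I would not rederive this; this is the theorem of Nikolov and Pyber \cite{NP11}, and in the paper one cites it as such.

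Either way, $|G|^\varepsilon \le d^{C}$ gives $\varepsilon' = \varepsilon/C$.
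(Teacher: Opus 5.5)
Your first direction is correct and coincides with the paper's: the permutation representation minus the constant functions is a nontrivial representation of degree $|\Omega|-1$. For the converse you end up, like the paper, invoking Nikolov--Pyber to get a nontrivial action on $d^{O(1)}$ points.

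The genuine problem is the step ``$G$ is perfect, since otherwise it has a quotient $C_p$ with $p\ge|G|^\varepsilon$''. That is not a contradiction, so nothing is deduced, and your Case~2 rests on it. In fact perfectness cannot be deduced from the hypothesis. $G=C_p$ satisfies the hypothesis with $\varepsilon=1$, since any nontrivial action of $C_p$ has an orbit of size $p=|G|$. Yet its nontrivial characters have degree $1<|G|^{\varepsilon'}$ for every $\varepsilon'>0$.

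So the converse is false as stated and needs ``$G$ perfect'' as an added hypothesis. This matches the paper's remark that $G$ is $\varepsilon$-quasirandom for some $\varepsilon>0$ iff $G$ is perfect. The paper's one-line proof tacitly needs the same thing: no theorem can turn a nontrivial irreducible of degree $n$ into a nontrivial action on $c_0n^2$ points for arbitrary $G$, since $C_p$ with $n=1$ defeats it. Perfectness is also what covers bounded $|G|$ at the end. The inequality $c_0d^{C}\ge|G|^\varepsilon$ gives $d\ge|G|^{\varepsilon/(2C)}$ only once $|G|\ge c_0^{2/\varepsilon}$, and below that you need $d\ge 2$.

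A smaller inaccuracy: the Jordan--Collins detour loses far more than ``a log factor''. From $(d+1)!\ge|G|^\varepsilon$ you only get $d\gg\varepsilon\log|G|/\log\log|G|$, which is exponentially weaker than $d\ge|G|^{\varepsilon'}$. Since you abandon that route in favour of the Nikolov--Pyber theorem, this does not affect the final argument.
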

\begin{proof}
Let $|\Omega|=n$.
We have a nontrivial homomorphism $G \to \Sym(n)$,
and $\Sym(n)$ can be naturally embedded in $\GL_n(\mathbb{C})$.
Removing the diagonal subspace we obtain a nontrivial linear representation of degree $n-1$ and the claim follows.

Conversely, suppose that $G$ has a nontrivial irreducible linear representation of degree $n$.
By \cite[Theorem~4]{NP11} we obtain a nontrivial action of $G$ on a set of cardinality at most $c_0 n^2$
where $c_0$ is an absolute constant.
Therefore $c_0 n^2 \geq |G|^\varepsilon$ and the proof follows.
\end{proof}

We will often use the following very special case.

\begin{corollary} \label{corQChief}
Let $G$ be $\varepsilon$-quasirandom, and $H,N$ normal subgroups of $G$ with $N \subseteq H$.
If $H/N$ is not central in $G/N$,
then $|H:N| > |G|^\varepsilon$.
\end{corollary}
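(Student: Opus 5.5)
The plan is to apply Lemma~\ref{lemQSet} to the conjugation action of $G$ on the set $\Omega = H/N$ of cosets. Since $H$ and $N$ are normal in $G$, conjugation by $g \in G$ sends a coset $hN$ to $g h g^{-1} N$. This action is well defined because $gNg^{-1}=N$, and $g h g^{-1} \in H$ since $H$ is normal. So $G$ acts on the finite set $H/N$, which has cardinality $|H:N|$.

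Next I would check that this action is nontrivial. If it were trivial, then every $g \in G$ and every $h \in H$ would satisfy $ghg^{-1}N = hN$. Passing to $G/N$, this says that $gN$ commutes with $hN$ for all $g\in G$ and $h\in H$. In other words, $H/N$ would be contained in the center of $G/N$, which contradicts the hypothesis. Hence the action is nontrivial.

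The first part of Lemma~\ref{lemQSet} then gives $|H:N| = |\Omega| > |G|^\varepsilon$ directly. None of these steps is a real obstacle. The only point needing care is the identification of a trivial conjugation action on $H/N$ with the centrality of $H/N$ in $G/N$, and this is immediate from the definitions.
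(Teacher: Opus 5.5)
Your proof is correct and is exactly the paper's argument. $G$ acts on $H/N$ by conjugation, this action is nontrivial precisely because $H/N \nsubseteq Z(G/N)$, and the first part of Lemma~\ref{lemQSet} then gives $|H:N| > |G|^\varepsilon$.
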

\begin{proof}
Observe that $G/N$, and so $G$, acts non-trivially on $H/N$ by conjugation
(the identity element is fixed), and use Lemma~\ref{lemQSet}.
\end{proof}

Moreover, if $H<G$ and we consider the action on the right cosets by multiplication,
we obtain $|G:H|>|G|^\varepsilon$, i.e. $|H| < |G|^{1-\varepsilon}$.

The \emph{solvable radical} $\Sol(G)$ is the largest solvable normal subgroup of $G$.
The following result from \cite{BS25} gives a structural characterization of quasirandom groups.

\begin{theorem}[Barbieri--Sabatini] \label{thBS}
Fix $\varepsilon>0$ and let $G$ be $\varepsilon$-quasirandom.
If $|G|$ is sufficiently large with respect to $\varepsilon$, then $|\Sol(G)| < |G|^{1-\varepsilon}$,
and $G/\Sol(G)$ is a direct product of $\varepsilon$-boundedly many finite simple groups
of Lie type of $\varepsilon$-bounded rank.
\end{theorem}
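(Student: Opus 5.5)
Since the statement concerns the structure of $G$ and of $G/\Sol(G)$, I would argue directly, using the quotient-size consequences of quasirandomness (Lemma~\ref{lemQSet}) together with the structure of groups with trivial solvable radical.

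\emph{Bound on the radical.} Since $G$ is perfect and nontrivial, it is not solvable, so $\Sol(G)$ is a proper subgroup. The action of $G$ on the right cosets of $\Sol(G)$ is nontrivial. By the remark after Corollary~\ref{corQChief}, this gives $|G:\Sol(G)|>|G|^\varepsilon$, that is, $|\Sol(G)|<|G|^{1-\varepsilon}$. In particular $H:=G/\Sol(G)$ is $\varepsilon$-quasirandom with $|H|\geq |G|^\varepsilon$, and hence $|H|$ is large.

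\emph{Socle analysis.} Since $\Sol(H)=1$, the socle $M$ of $H$ is a direct product of minimal normal subgroups $N_1,\dots,N_r$, with $N_i\cong T_i^{m_i}$ and each $T_i$ nonabelian simple. The group $H$ permutes the $m_i$ simple factors of $N_i$, and this action is transitive. If $m_i>1$, the action is nontrivial, so Lemma~\ref{lemQSet} gives
$$m_i>|H|^\varepsilon\geq |N_i|^\varepsilon\geq 60^{\varepsilon m_i},$$
which forces $m_i\leq C(\varepsilon)$. Then $|H|^\varepsilon<m_i\leq C(\varepsilon)$, contradicting the largeness of $|H|$. Hence each $N_i=T_i$ is simple.

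Next, $C_H(M)=1$: otherwise $C_H(M)$ would contain a minimal normal subgroup of $H$, which lies in $M$ and is therefore central in $M$, which is impossible. So conjugation gives an embedding $H\hookrightarrow \prod_i \Aut(T_i)$. The image of $H$ in each $\mathrm{Out}(T_i)$ is perfect and solvable, by Schreier's conjecture (which follows from CFSG), and is therefore trivial. Thus $H$ embeds in $\prod_i T_i=M\leq H$, so $H=T_1\times\dots\times T_r$. For each $i$, $T_i$ is a quotient of $H$, so $|T_i|\geq |H|^\varepsilon$. Since $|H|=\prod_i|T_i|$, this gives $r\leq \varepsilon^{-1}$.

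\emph{Type of the factors.} Each $T_i$ is itself quasirandom. Indeed, a nontrivial irreducible representation of $T_i$ inflates to one of $H$, so it has degree at least $|H|^\varepsilon\geq|T_i|^\varepsilon$. The remaining task is to rule out large alternating groups and Lie-type groups of large rank.
\begin{itemize}
\item For $\Alt(n)$, the standard representation has degree $n-1$, so $n-1\geq (n!/2)^\varepsilon$, which bounds $n$.
\item The sporadic groups are finitely many.
\item For a classical group of rank $\ell$ over $\mathbb F_q$, there is a nontrivial representation of degree $q^{O(\ell)}$, e.g.\ a Weil representation or one arising from the permutation action on points. Meanwhile the order is $q^{\Theta(\ell^2)}$, so $\ell=O(\varepsilon^{-1})$.
\end{itemize}
The step I expect to be the main obstacle is this last one: quoting (or reproving crude versions of) the upper bounds on minimal nontrivial degrees, uniformly across all Lie families. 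Sufficiently large $|G|$ then absorbs all the bounded exceptions, namely small alternating groups, sporadic groups, and groups of bounded order, which can be treated as Lie type of bounded rank.
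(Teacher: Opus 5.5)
The paper does not prove this statement; it quotes it as a black box from \cite{BS25}. So there is no internal argument to compare with, and your proposal has to stand as a self-contained proof. It does, modulo CFSG.

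\begin{itemize}
\item The radical bound is exactly the coset-action remark after Corollary~\ref{corQChief}.
\item The socle analysis is sound. Transitivity on the $m_i$ simple factors, together with the easy direction of Lemma~\ref{lemQSet}, forces $m_i=1$.
\item $C_H(M)=1$ because $Z(M)=1$.
\item Since every $T_i=N_i$ is normal in $H$, you really do land in $\prod_i\Aut(T_i)$ rather than the full $\Aut(M)$. Schreier's conjecture plus perfectness then gives $H=M$, and counting orders gives $r\le\varepsilon^{-1}$.
\end{itemize}

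Two small points on the last step.

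First, the ``main obstacle'' is milder than you expect. Exceptional groups have rank at most $8$, so only the classical families matter. For a classical group of rank $\ell$ over $\mathbb{F}_q$, the action on the points of the projective space of its natural module has size $q^{O(\ell)}$ (work over $\mathbb{F}_{q^2}$ in the unitary case). The easy direction of Lemma~\ref{lemQSet} then gives $q^{O(\ell)}\ge|T|^{\varepsilon}\ge q^{c\varepsilon\ell^2}$, hence $\ell=O(\varepsilon^{-1})$. No Weil-representation or Landazuri--Seitz type bounds are needed.

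Second, what excludes sporadic factors and alternating groups of bounded degree is that each $T_i$ is a nontrivial quotient of $G$ itself. Hence $|T_i|>|G|^{\varepsilon}\to\infty$, which is slightly better than your $|H|^\varepsilon$. Say this explicitly rather than ``treating'' bounded groups as Lie type.

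Compared with citing \cite{BS25}, your argument shows that the only inputs beyond the definition are:
\begin{itemize}
\item the elementary direction of the Nikolov--Pyber correspondence;
\item Schreier's conjecture;
\item the list of finite simple groups.
\end{itemize}
Your final step also reproves the ``only if'' half of the fact quoted without proof in the proof of Theorem~\ref{thBGGT}.
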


Theorem~\ref{thBS} is especially meaningful for groups with trivial solvable radical,
and we refer the reader to Example~\ref{exQ} for some quasirandom groups with large solvable normal subgroups.

\subsection{Crowns and generation}

In this subsection we recall some results from the papers \cite{DVL98,DL03,LM02},
as well as from the convenient survey \cite{Luc23}.

The \emph{Frattini subgroup} of a finite group is the intersection of the maximal subgroups.
The following observation will be useful.

\begin{remark} \label{remZF}
In a finite perfect group $G$ the center is contained in the Frattini subgroup.
In fact, suppose that $Z(G) \nsubseteq H$ for some maximal subgroup $H$ of $G$.
Then $Z(G) H =G$ and so $G = G' = H' \subseteq H$, which is impossible.
\end{remark}

A finite group $L$ is \emph{primitive monolithic} if it has a unique minimal normal subgroup and trivial Frattini subgroup.
For a monolithic primitive group $L$ with socle $A$,
the \emph{crown-based power} of level $k$ is defined by
$$ L_k \> := \> \{ (l_1,\ldots,l_k) \in L^k : l_1 \equiv \cdots \equiv l_k \mod A \} . $$
In particular, $L_k$ is an extension of $A^k$ by $L/A$.

Let $d(G)$ be the minimal cardinality of a generating set.
In \cite{DVL98}, Dalla Volta and Lucchini proved that for every finite group $G$
there exist a primitive monolithic group $L$ and $k$
such that the crown-based power $L_k$ is a quotient of $G$ and $d(G) = d(L_k)$.
We now see how to attach a family of crown-based powers to an arbitrary finite group $G$.
A chief factor $A = H/N$ of $G$ is called \emph{non-Frattini} if $H/N$ is not contained in the Frattini subgroup of $G/N$.
For a non-Frattini chief factor $A$ of $G$, we associate a monolithic primitive group by
$$ L_A(G) \> := \> 
\begin{cases}
  A \rtimes (G/C_G(A)) \hspace{0.5cm} \text{ if } A \text{ is abelian,}  \\
 G/C_G(A) \hspace{1.6cm} \text{ otherwise.}  
\end{cases} $$
We simply write $L_A$ when $G$ is clear from context.
Let $\delta_A(G)$ be the maximum $k$ such that, for $L = L_A(G)$,
$G$ has a quotient isomorphic to the crown-based power $L_k$.

We come to the probabilistic framework.
For a positive integer $t$, let $P_G(t)$ be the probability that $t$ random elements generate $G$.
In the seminal paper \cite{DL03},
Detomi and Lucchini described how to factorize $P_G(t)$ in terms of the crown-based powers associated to $G$.
For a monolithic primitive group $L$ with socle $A$, let
$$ P_{L,A}(t) \> := \> \frac{P_L(t)}{P_{L/A}(t)} , $$
i.e. the probability that $t$ random elements generate $L$ given that they generate $L/A$.
 Moreover, define
$$ \widetilde{P}_{L,1}(t) \> := \> P_{L,A}(t) , \hspace{1cm} 
\widetilde{P}_{L,2}(t) \> := \> P_{L,A}(t) - \frac{\gamma_A}{|A|^t} , $$ 
where $\gamma_A = |C_{\Aut(L)}(L/A)|$, and for $i \geq 3$,
$$ \widetilde{P}_{L,i}(t) \> := \> P_{L,A}(t) - \frac{(1+q_A+\cdots+q_A^{i-2})\gamma_A}{|A|^t} , $$ 
where $q_A = |\End_L(A)|$ if $A$ is abelian and $q_A =1$ otherwise.
The following formula is \cite[Theorem~18]{DL03}.

\begin{theorem}[Detomi--Lucchini] \label{thDL}
We have
$$ P_G(t) \> = \>
\prod_{A} \left( \prod_{1 \leq i \leq \delta_A(G)} \widetilde{P}_{L_A,i}(t) \right) , $$
where $A$ runs in the set of irreducible $G$-groups $G$-equivalent to a non-Frattini
chief factor of $G$,
and $L_A$ is the monolithic primitive group associated with $A$.
\end{theorem}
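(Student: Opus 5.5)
The plan is to reduce $P_G(t)$ to a product of conditional probabilities along a chief series, and then evaluate each factor inside a crown-based power. Fix a chief series $1=N_r<\dots<N_0=G$. Then
$$P_G(t)=\prod_{j} \Pr\bigl(\text{$t$ random elements generate } G/N_{j+1} \,\big|\, \text{they generate } G/N_j\bigr).$$
The key tool is Gasch\"utz's lemma: for $N\trianglelefteq G$, the number of lifts of a generating $t$-tuple of $G/N$ to a generating $t$-tuple of $G$ does not depend on the chosen tuple. Consequently each conditional factor equals $P_{G/N_{j+1}}(t)/P_{G/N_j}(t)$, and it can be computed from any single generating tuple of $G/N_j$.

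Next I handle the Frattini factors. If $N_j/N_{j+1}$ lies in $\Phi(G/N_{j+1})$, then every lift of a generating tuple still generates, so the factor is $1$. This is why only non-Frattini factors appear in the product.

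I then regroup the non-Frattini factors by $G$-equivalence class. For a class $A$, let $R_A$ be the smallest normal subgroup such that $G/R_A\cong L_k$ with $L=L_A$ and $k=\delta_A(G)$. One shows, as in crown theory (Gasch\"utz, Dalla Volta--Lucchini), that the number of non-Frattini chief factors equivalent to $A$ in any chief series equals $\delta_A(G)$. Moreover, the series can be refined so that these factors appear as the successive kernels $A^{i}\to A^{i-1}$ in the chain $L_k\to L_{k-1}\to\dots\to L_1=L\to L/A$. The heart of the argument is that the conditional factor attached to the $i$-th factor of type $A$ depends only on $L$ and $i$. Using Gasch\"utz again, this factor equals $P_{L_i}(t)/P_{L_{i-1}}(t)$ computed inside the crown-based power.

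Finally I compute this ratio by counting bad lifts. Fix a generating tuple of $L_{i-1}$ and lift it to $L_i$ in all $|A|^t$ ways. A lift fails to generate exactly when it lies in a maximal subgroup supplementing the new copy of $A$, that is, a complement of $A$ in $L_i$ over $L_{i-1}$. Each such complement contains exactly one lift, so the number of bad lifts equals the number of these complements. For $i=1$ this number is $(1-P_{L,A}(t))|A|^t$. For $i\ge 2$ there are additional complements coming from "diagonal" twisting against the previous $i-1$ copies of $A$. Their number is $(1+q_A+\dots+q_A^{i-2})\gamma_A$: the automorphisms centralising $L/A$ supply the factor $\gamma_A$, and the $\End_L(A)$-projective choices supply $1+q_A+\dots+q_A^{i-2}$, with $q_A=1$ in the nonabelian case. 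Dividing by $|A|^t$ gives $\widetilde P_{L,i}(t)$, and multiplying over all $i$ and all classes $A$ yields the formula of Theorem~\ref{thDL}.

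I expect the main obstacle to be the complement count. This means verifying that the complements of the $i$-th copy of $A$ in $L_i$ not accounted for by $L_1$ are parametrised as claimed, via $H^1$ and $\End_L(A)$ in the abelian case and via $C_{\Aut(L)}(L/A)$ in the nonabelian case. I would treat the two cases separately and rely on the standard description of maximal subgroups of crown-based powers.
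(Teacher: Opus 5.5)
The paper gives no proof of this statement; it quotes Theorem~18 of Detomi--Lucchini. Your architecture (Gasch\"utz along a chief series, Frattini factors contributing $1$, evaluation inside crown-based powers) is the same as theirs. However, there is a genuine gap at the step you call the heart.

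\textbf{The reduction to crown-based powers is missing.} One chief series has to serve every equivalence class at once, and the subgroups $R_A$ for different classes are not nested. For $G=T_1\times T_2$ with non-isomorphic simple $T_i$ one has $R_{T_1}=T_2$ and $R_{T_2}=T_1$. In either chief series the bottom factor sits in $X=G$, which is not a crown-based power for its class. Gasch\"utz's lemma only makes the factor $P_{X,N}(t)$ well defined (here $X=G/N_{j+1}$ and $N=N_j/N_{j+1}$). It gives no comparison with $L_i$ and no independence of the chosen series.

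The missing idea is the crown lemma: if $N$ is a non-Frattini minimal normal subgroup of $X$, then every maximal subgroup $M$ with $MN=X$ contains $R_X(N)$. To see this, look at the primitive group $X/\mathrm{core}_X(M)$. Either it is monolithic with socle $X$-isomorphic to $N$, hence isomorphic to $L_N$. Or it has two nonabelian minimal normal subgroups, both $X$-equivalent to $N$; their centralizers have quotients isomorphic to $L_N$ and intersect in $\mathrm{core}_X(M)$.

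Since moreover $N\cap R_X(N)=1$, a lift of a generating tuple of $X/N$ generates $X$ if and only if its image generates $X/R_X(N)\cong L_\delta$, where $\delta=\delta_X(N)$. Every minimal normal $B\le A^\delta$ satisfies $L_\delta/B\cong L_{\delta-1}$, so $P_{X,N}(t)=P_{L_\delta}(t)/P_{L_{\delta-1}}(t)$. Induction via $P_G=P_{G/N}P_{G,N}$ then yields the product formula, because $\delta_X(N)$ is exactly the position $i$ of the factor within its class.

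\textbf{The bad-lift count is right only in the abelian case.} For abelian $A$ your bijection holds. The intersection $\langle\tilde x\rangle\cap A_i$ is normal in $L_i$, so a bad lift generates a complement of $A_i$. Complements are graphs of homomorphisms $L_{i-1}\to L$ over $L/A$, and there are $q_A^{i-1}|Z^1(L/A,A)|$ of them. Together with $\gamma_A=(q_A-1)|Z^1(L/A,A)|$ this gives exactly $\widetilde P_{L,i}(t)$.

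For nonabelian $A$, both ``supplement $=$ complement'' and ``bad lifts $=$ complements'' fail, because $\langle\tilde x\rangle\cap A_i$ need not be normal. They fail already for $i=1$ and $L=A$ simple: there is one complement but $(1-P_A(t))|A|^t$ bad tuples. The correct split for $i\ge 2$ is as follows.
\begin{itemize}
\item[(a)] The last coordinate of the lift fails to generate $L$. This accounts for $(1-P_{L,A}(t))|A|^t$ lifts.
\item[(b)] The lift generates the graph of a surjection $f\colon L_{i-1}\to L$ over $L/A$. Such an $f$ kills all copies of $A$ but one, so $f=\alpha\circ\pi_j$ with $\alpha\in C_{\Aut(L)}(L/A)$ and $1\le j\le i-1$. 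This gives the extra $(i-1)\gamma_A$.
\end{itemize}
Your final numbers are correct, but in the nonabelian case the mechanism you describe does not produce them.
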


Finally, the following is \cite[Theorem~1.1]{LM02}:

\begin{theorem}[Lucchini--Morini] \label{thLM}
Let $L$ be a monolithic primitive group with socle $A$.
For any $t \geq d(L)$ we have
$$ P_{L,A}(t) \> \to \> 1 $$
when $|A| \to \infty$.
\end{theorem}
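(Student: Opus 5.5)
The proof splits according to whether the socle $A$ is abelian or not. In both cases it is a union bound over maximal subgroups of $L$ that supplement $A$. Fix a $t$-tuple $(x_1,\ldots,x_t)$ generating $L/A$, and choose lifts $g_i\in L$. A uniformly random lift is $(g_1a_1,\ldots,g_ta_t)$ with $(a_1,\ldots,a_t)\in A^t$ uniform. Such a lift fails to generate $L$ exactly when it lies in a maximal subgroup $H<L$ with $HA=L$; any other maximal subgroup contains $A$, and would then also contain the generators of $L/A$. For a fixed supplement $H$, the lifts lying in $H$ form a coset of $(H\cap A)^t$ in $A^t$, or the empty set. Hence
$$1-P_{L,A}(t)\;\le\;\sum_{H} \Bigl(\frac{|H\cap A|}{|A|}\Bigr)^{t},$$
where $H$ runs over the maximal supplements of $A$. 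The task is to show that this sum tends to $0$ as $|A|\to\infty$, uniformly in $L$.

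\emph{Abelian case.} Here $A$ is an irreducible, faithful, nontrivial $L/A$-module; if the action were trivial, $L$ would be cyclic of prime order and $|A|$ bounded. The supplements are exactly the complements, so $H\cap A=1$. Their number is $|\mathrm{Der}(L/A,A)| = |A|\cdot|H^1(L/A,A)|$, using that $C_A(L/A)=1$. The bound therefore becomes $|H^1(L/A,A)|\,|A|^{1-t}$. I would then invoke the known bound $|H^1(G,V)|\le |V|^{1/2}$ for faithful irreducible modules, in the Aschbacher--Guralnick / Guralnick--Hoffman line. Since $t\ge d(L)\ge 2$, this gives at most $|A|^{-1/2}\to 0$. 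If only the weaker bound $|H^1|<|V|$ were available, the case $t=2$ would need an extra count, restricting to complements that actually contain some lift of the fixed tuple.

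\emph{Nonabelian case.} Write $A=S^n$ with $S$ nonabelian simple, so that $|A|\to\infty$ means $|S|\to\infty$ or $n\to\infty$. I would use the Aschbacher--O'Nan--Scott description of maximal subgroups $H$ of $L$ supplementing $A$; these are core-free, since $A$ is the unique minimal normal subgroup. They fall into two kinds.
\begin{itemize}
\item Subgroups with $H\cap A$ a product over the $n$ coordinates of a proper subgroup. These come from $N_L(B)$ with $B$ an $L$-conjugate of a product $T^n$, $T<S$ maximal. They contribute roughly $\sum_{T} |S:T|^{n(1-t)}$, summed over conjugacy classes of maximal $T$.
\item Diagonal-type subgroups, for which $|H\cap A|\le |S|^{n/2}$. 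Their number is bounded by $|S|^{O(n)}$ times the number of partitions of $\{1,\ldots,n\}$ into blocks of size at least $2$.
\end{itemize}
I would then bound the number of maximal subgroups of $S$ of index $m$ by $m^{O(1)}$, using the Liebeck--Shalev, Liebeck--Pyber--Shalev results on subgroup growth of simple groups. This makes the first sum $O(\min_T |S:T|^{-n(t-1)+O(1)})$, which tends to $0$ once $t\ge 2$.

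The main obstacle is this nonabelian, diagonal count when $S$ is small (for example $\Alt(5)$) and $n$ is large. A naive bound gives roughly $n^{n}|S|^{n}\cdot |S|^{-nt/2}$, which for $t=2$ does not obviously tend to $0$. Here I would use the fact that the partition must be $L/A$-invariant, so the number of admissible partitions is controlled by the permutation group induced by $L$ on the $n$ factors. Bounds for orbits and invariant partitions of transitive groups of degree $n$ give a count subexponential in $n\log|S|$ in the relevant regime. I would also use the hypothesis $t\ge d(L)$ to rule out the degenerate small cases.
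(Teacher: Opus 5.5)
The paper gives no proof of this statement; it is quoted from Lucchini--Morini (Theorem~1.1 there), so your argument has to stand on its own. Your framework is right. Conditioning on a fixed generating $t$-tuple of $L/A$, the lifts lying in a maximal supplement $H$ form a proportion exactly $|A:H\cap A|^{-t}$. The abelian case then closes with the Guralnick--Hoffman bound $|H^1(L/A,A)|\le|A|^{1/2}$, as in Lemma~\ref{lemComp}. There is one slip: trivial action forces $L=A\cong C_p$ with $p$ unbounded, not $|A|$ bounded, but then $1-P_{L,A}(t)=p^{-t}$ trivially.

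The genuine gap is the twisted wreath case, i.e.\ maximal supplements with $H\cap A=1$. Your two buckets either omit these or fold them into the diagonal one. But no partition is attached to a complement, and here the union bound has no slack at all. A complement contains exactly one lift of the fixed tuple, and a lift lies in at most one complement, so $1-P_{L,A}(t)\ge \#\{\text{complements}\}/|A|^t$. The only a priori count is that a complement is generated by the lift it contains. This gives $\#\{\text{complements}\}\le|A|^{d(L/A)}$, which equals $|A|^t$ when $t=d(L/A)$. That case is not excluded by $t\ge d(L)$, since $d(L)=\max(2,d(L/A))$ for nonabelian socle (Lucchini--Menegazzo). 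Equivalently, by Shapiro's lemma the proportion is $|Z^1(K_1,S)|/|S|^{n(t-1)+1}$, with $K_1$ the stabilizer in $L/A$ of one factor. Schreier's bound $d(K_1)\le n(t-1)+1$ again gives only $\le 1$. So ``$|S|^{O(n)}$ times partitions'' plus a vague appeal to $t\ge d(L)$ cannot work; you must prove $|Z^1(K_1,S)|=o(|S|^{n(t-1)+1})$.

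One way is to view cocycles as homomorphisms $\theta\colon K_1\to\Aut(S)$ congruent to the given action modulo $\mathrm{Inn}(S)$. If the image contains $\mathrm{Inn}(S)$, then $\theta$ is determined by a normal subgroup with quotient $S$ of the preimage $J$ of $\mathrm{Inn}(S)$, together with an automorphism of $S$. Since $C_L(A)=1$, a complement in $\Aut(S)\wr\Sym(n)$ meets $\mathrm{Inn}(A)$ trivially, so there are at most $n|\Aut(S)|$ such $\theta$. The remaining $\theta$ lie in a maximal subgroup $M$ of the almost simple group induced on $S$, and contribute at most $\sum_M|S:M\cap S|^{-(n(t-1)+1)}$.

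A second, smaller gap is the almost simple case $n=1$, $t=2$. After grouping by conjugacy classes you need $\sum_{\text{classes}}|S:T|^{-1}\to0$. No bound $m^{O(1)}$ on the number of maximal subgroups of index $m$ gives this, since one class of index $m$ already has $m$ members. You need the Liebeck--Shalev theorem that $\sum_M|X:M|^{-s}\to0$ for fixed $s>1$ over maximal subgroups of almost simple $X$, which is essentially ``two random elements generate''. This also handles the second sum above. Finally, the diagonal bucket works along your lines, but only with an exact count. Per $L$-invariant block system with blocks of size $k$ there are at most $|\Aut(S)|^{n(1-1/k)}$ diagonals, each of weight $|S|^{-tn(1-1/k)}$, and there are at most $n^{\log_2 n}$ block systems.
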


\subsection{Expander Cayley graphs}

If $G$ is a finite group and $S \subseteq G$, the \emph{Cayley graph} $\Cay(G,S)$ is the undirected
graph with vertex set $G$ and edges $(g, s^{\pm 1} g)$ for $g \in G$ and $s \in S$.
We allow multiple edges, so $\Cay(G,S)$ is always a $d$-regular graph for $d= 2|S|$.
When $N \unlhd G$, with an abuse of notation we write $\Cay(G/N,S)$
to denote the Cayley graph with respect to the projection of $S$ in $G/N$
(note that this projection is a multiset in general).
We write $\gap(G,S)$ to denote the spectral gap of the normalized adjacency matrix of $\Cay(G,S)$.
Thus $\gap(G,S) >0$ if and only if $\Cay(G,S)$ is connected if and only if $G=\langle S \rangle$.
Finally we say that $\Cay(G,S)$ is an \emph{expander} if $\gap(G,S) \geq \delta$
for some constant $\delta >0$ depending only
on the relevant parameters.
For instance, the conclusion of the following theorem is that
$\gap(G,S) \geq \delta(\varepsilon) >0$ with probability $1-o(1)$ as $|G| \to \infty$.

\begin{theorem}[Breuillard--Green--Guralnick--Tao] \label{thBGGT}
Fix $\varepsilon >0$, and let $G$ be an $\varepsilon$-quasirandom simple group.
Then $2$ random elements give an expander Cayley graph with high probability.
\end{theorem}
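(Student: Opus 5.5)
The plan is to reduce this statement to the main theorem of \cite{BGGT15}. That theorem says the following. For each $r$ there is $\delta(r)>0$ such that, if $G$ is a finite simple group of Lie type of rank at most $r$, then two uniformly random elements $a,b\in G$ satisfy $\gap(G,\{a,b\})\geq\delta(r)$ with probability $1-o(1)$ as $|G|\to\infty$. So it suffices to show two things: that an $\varepsilon$-quasirandom simple group of sufficiently large order is of Lie type, and that its rank is bounded in terms of $\varepsilon$ alone. Once that holds, the constant $\delta(\varepsilon):=\delta(r(\varepsilon))$ works. The finitely many groups of order below the threshold are irrelevant for a ``with high probability'' statement, since the statement concerns $|G|\to\infty$.

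The structural reduction is immediate from Theorem~\ref{thBS}. For $G$ simple and $|G|$ large in terms of $\varepsilon$, we have $\Sol(G)=1$, so $G=G/\Sol(G)$ is a product of boundedly many simple groups of Lie type of $\varepsilon$-bounded rank. Being simple, $G$ is itself one such group. One could also check this by hand, using the classification of finite simple groups, as follows.
\begin{itemize}
\item Sporadic groups are finitely many, so they only affect small orders.
\item $\Alt(n)$ has a nontrivial irreducible representation of degree $n-1$, whereas $|\Alt(n)|^\varepsilon=(n!/2)^\varepsilon$ exceeds $n-1$ for large $n$. Equivalently, Lemma~\ref{lemQSet} applies to the natural action on $n$ points.
\item A group of Lie type of rank $r$ over $\mathbb{F}_q$ has order $q^{\Theta(r^2)}$. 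By Landazuri--Seitz type bounds, its minimal nontrivial degree is at most $q^{O(r)}$; one can see this concretely via a permutation action on a set of size $q^{O(r)}$, such as the points of the natural projective module, together with Lemma~\ref{lemQSet}. Quasirandomness then forces $q^{O(r)}\geq q^{\varepsilon\,\Theta(r^2)}$, so $r\ll\varepsilon^{-1}$.
\end{itemize}

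The main step is thus entirely contained in \cite{BGGT15}, and I would not reprove it. Their proof follows the Bourgain--Gamburd machine. First, random pairs generate a strongly dense free subgroup of the ambient algebraic group, which gives the non-concentration (escape from subgroups) estimate and girth-type bounds. Second, the product theorem of \cite{BGT11,PS16} turns non-concentration into flattening of the random walk. Third, quasirandomness lets the Sarnak--Xue argument convert flattening into a spectral gap. The one genuine obstacle in this route is the strong density step: showing that for random $a,b$ the group $\langle a,b\rangle$ is not trapped near any proper algebraic subgroup, uniformly in the field. Here the dependence of the constants on the rank, and hence only on $\varepsilon$, has to be tracked, and this is where the bounded-rank conclusion of the reduction above is essential.
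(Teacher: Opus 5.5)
Your proposal is correct and matches the paper's proof, which likewise just notes that a large $\varepsilon$-quasirandom simple group is of Lie type of rank $O(\varepsilon^{-1})$ and then invokes \cite[Theorem~1.2]{BGGT15}. Your justification of the rank bound fills in the step the paper states without proof, via Theorem~\ref{thBS}, or via Lemma~\ref{lemQSet} applied to a projective action of size $q^{O(r)}$ compared with $|G|=q^{\Theta(r^2)}$. Note that Landazuri--Seitz bounds are lower bounds on degrees; the upper bound you need comes from that permutation action, which you do supply.
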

\begin{proof}
A large finite simple group is $\varepsilon$-quasirandom if and only if
it is a group of Lie type of rank $O(\varepsilon^{-1})$.
Therefore, this is \cite[Theorem~1.2]{BGGT15}.
\end{proof}

\subsection{Golsefidy--Srinivas theorems}

We now report streamlined versions of two essential results of Golsefidy and Srinivas \cite{GS25},
and we refer the reader to \cite{GS25} for slightly more general statements.
We start with the following theorem for direct products,
whose main idea lies in the beautiful paper \cite{GS24}.

\begin{theorem}[Golsefidy--Srinivas] \label{thGSDirPro}
Suppose $G_1$ and $G_2$ are finite groups with trivial center such that
\begin{itemize}
\item $G_1 \times G_2$ is $\varepsilon$-quasirandom, and
\item for each $x \in G_i$ we have $\langle x^{G_i} \rangle = ((x^{\pm 1})^{G_i})^{c}$ for some constant $c \geq 1$.
\end{itemize}
If $G_1 \times G_2 = \langle S \rangle$, then
$$ \gap(G_1 \times G_2,S) \> \gg_{\varepsilon,c,|S|} \> \min \{ \gap(G_1,S) , \gap(G_2,S) \} . $$
\end{theorem}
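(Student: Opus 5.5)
The plan is to reduce to an $\ell^2$-flattening estimate for the random walk, and then convert flattening into a spectral gap with the Sarnak--Xue multiplicity trick. Quasirandomness is what makes this conversion cost no factor of $\log|G|$.

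\emph{Setup and the Sarnak--Xue step.} Write $G=G_1\times G_2$, let $\mu$ be the uniform measure on $S\cup S^{-1}$, and put $\delta=\min\{\gap(G_1,S),\gap(G_2,S)\}$. Every nontrivial irreducible representation of $G$ is $\rho_1\otimes\rho_2$.
\begin{itemize}
\item If exactly one factor is trivial, the eigenvalues of $\mu$ on it are eigenvalues of $\Cay(G_i,S)$, so they are at most $1-\delta$.
\item The real case is $\rho_1,\rho_2$ both nontrivial.
\end{itemize}
Fix $\ell=C\log|G|/\delta$. Suppose we can prove
$$\|\mu^{*\ell}\|_2^2\le |G|^{-1+\varepsilon/2}.$$
Let $\lambda$ be an eigenvalue of $\mu$ on a nontrivial irreducible $\rho$. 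By $\varepsilon$-quasirandomness, $\lambda$ occurs in the regular representation with multiplicity at least $\dim\rho\ge|G|^\varepsilon$. Taking traces gives
$$\lambda^{2\ell}|G|^{\varepsilon}\le |G|\,\|\mu^{*\ell}\|_2^2\le |G|^{\varepsilon/2},$$
hence $\lambda\le \exp(-\varepsilon\log|G|/(4\ell))=\exp(-c\varepsilon\delta)$. This gives a gap $\gg_\varepsilon\delta$, so it suffices to prove the flattening bound.

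\emph{Flattening.} Expansion of each factor gives that both projections of $\mu^{*\ell}$ are $\ell^2$-close to uniform on $G_1$ and on $G_2$. A standard $\ell^2$ argument (Bourgain--Gamburd style) then reduces flattening on $G$ to non-concentration. We must show that $\mu^{*\ell}$ gives mass at most $|G|^{-\kappa}$ to every coset of a subgroup $H\le G$ that projects onto both factors and has index $\ge |G|^{\varepsilon/10}$, say.

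By Goursat's lemma, such $H$ are fibre products over isomorphisms $G_1/K_1\cong G_2/K_2$ with $K_i\unlhd G_i$. The trivial centers and quasirandomness make these quotients large. The worst cases are near-diagonals $\{(a,\theta(a))\}$.

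\emph{Main obstacle: escaping near-diagonals.} Suppose the walk concentrated on a coset of such an $H$. Since $\langle S\rangle=G$, some short word $w$ has image $(x,y)$ that does not respect $\theta$ modulo $K$. The first step is to manufacture an element of the form $(x',1)$ with $x'\notin K_1$, as follows:
\begin{itemize}
\item By pigeonhole, among words of length $O(\log|G|/\delta)$ there are two with equal $G_2$-image but different $G_1$-image.
\item Their quotient is such an element $(x',1)$, and it has controlled length.
\end{itemize}
The conjugacy hypothesis $\langle x'^{G_1}\rangle=((x'^{\pm1})^{G_1})^c$, combined with flatness of the $G_1$-projection (which makes conjugating words essentially uniform), then shows the following. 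Products of $c$ walk-conjugates of $(x',1)$ spread over the whole normal closure $\langle x'^{G_1}\rangle\times 1$, which is not contained in $K_1\times1$. This contradicts concentration on a coset of $H$, and a Plünnecke/Balog--Szemerédi-free counting argument quantifies the contradiction polynomially in $|G|$. Making this escape step uniform, with losses depending only on $\varepsilon$, $c$ and $|S|$ and not on $|G|$, is the delicate part. I would model it on the argument of \cite{GS24}.
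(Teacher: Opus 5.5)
The paper does not prove this statement. It imports it as \cite[Theorem~A]{GS25}, and the only work is matching hypotheses: quasirandomness of $G_1\times G_2$ amounts to quasirandomness of each $G_i$ together with $|G_i|\ge|G_1\times G_2|^{\varepsilon}$ (cf.\ Example~\ref{exQ}(a)), and the weight parameter $\alpha_0$ there can be taken to be $|S|^{-1}$.

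Your attempt to reprove it from scratch has genuine gaps. Your Sarnak--Xue reduction is correct: flattening $\|\mu^{*\ell}\|_2^2\le|G|^{-1+\varepsilon/2}$ at time $\ell=C\log|G|/\delta$, with $C=C(\varepsilon,c,|S|)$, does give the claim. The trouble starts with your assertion that a ``standard'' Bourgain--Gamburd argument reduces flattening to non-concentration on cosets of subgroups. The $\ell^2$-flattening lemma (via Balog--Szemer\'edi--Gowers) only says that if flattening fails, then $\mu^{*\ell}$ gives large mass to a coset of a $K$-approximate subgroup $A$. Upgrading $A$ to a genuine proper subgroup, to which Goursat's lemma applies, is precisely the role of a product theorem. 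No product theorem is available for the groups in the statement: the $G_i$ need not be simple, and growth already fails in $\SL_2(p)\times\SL_2(p)$, as noted in Section~\ref{secDia}. Indeed, this theorem is one of the ingredients the paper uses to \emph{avoid} a product theorem in Corollary~\ref{corBGFinal}. Flat marginals would force $\pi_i(A)$ to be large, so Gowers' trick gives $\pi_i(A^3)=G_i$. But you would still need a Goursat-type structure theorem for such approximate subgroups, with losses depending only on $\varepsilon$ and $c$, and you do not attempt one.

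The escape step is the real content of the theorem, and you defer it to ``the argument of \cite{GS24}''. What you do sketch for it is not quantitative. Exhibiting elements outside $gH$, or even covering $\langle (x')^{G_1}\rangle\times 1$ by $c$-fold products of conjugates, gives no upper bound on the mass $\mu^{*\ell}(gH)$. The covering hypothesis is a set-theoretic statement, not an equidistribution statement. To use $(x',1)$ quantitatively you would need the walk to pass through it with non-negligible probability. Your pigeonhole construction produces it at word length $\asymp\log|G|/\delta$, so the only general lower bound is $(2|S|)^{-O(\log|G|/\delta)}=|G|^{-O(\log(2|S|)/\delta)}$.

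Moreover, any non-concentration exponent obtained along these lines would depend on $\delta$. The number of doubling steps in the Bourgain--Gamburd iteration is controlled by that exponent, so you would not reach flattening at time $C(\varepsilon,c,|S|)\log|G|/\delta$. You would therefore lose the linear dependence on $\min\{\gap(G_1,S),\gap(G_2,S)\}$ that the statement asserts.

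So your proposal establishes only the reduction to flattening. Two things are missing, each with constants depending only on $\varepsilon$, $c$ and $|S|$: converting concentration on an approximate group into concentration on a fibre-product subgroup, and a quantitative argument that the walk escapes such subgroups. These are what \cite{GS24,GS25} supply and what the paper uses as a black box.
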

\begin{proof}
This is \cite[Theorem~A]{GS25}.
It is easy to check that quasirandomness of $G_1 \times G_2$ is equivalent to
a statement about quasirandomness of $G_1$ and $G_2$ and on their mutual cardinalities
(see also Example~\ref{exQ}(a)).
Moreover, we can choose $\alpha_0 = |S|^{-1}$.
\end{proof}

The fact that $S$ is a generating set for the ambient group is critical in Theorem~\ref{thGSDirPro},
and so is in Theorem~\ref{thProjection}.
Essentially, this is needed to avoid phenomena such as
generating the diagonal in the direct product of two isomorphic simple groups.

We come to the second result from \cite{GS25}.
In \cite{LV16}, Lindenstrauss and Varj\'u studied the affine group
$(\mathbb{F}_p)^n \rtimes \SL_n(p)$ for $n$ bounded,
and proved that its expansion can be deduced from expansion in the projection to $\SL_n(p)$.
 The following is a much more general version for quasirandom-by-abelian groups:

\begin{theorem}[Golsefidy--Srinivas] \label{thGSAbe}
Let $G$ be $\varepsilon$-quasirandom and let $A$ be an abelian normal subgroup of $G$ such that,
for every $a \in A$ we have $\langle a^G \rangle = ((a^{\pm 1})^G)^{c}$ for some constant $c \geq 1$.
If $G = \langle S \rangle$, then
$$ \gap(G,S) \> \gg_{\varepsilon,c,|S|} \> \gap(G/A,S) . $$
\end{theorem}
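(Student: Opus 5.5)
This statement is quoted from \cite{GS25}, so the proof amounts to matching hypotheses with theirs. Still, here is how I would prove it directly. The idea is to show that $A$-invariant functions carry the same spectral information as $G/A$, and that functions orthogonal to them cannot be almost invariant. Let $\lambda$ be an eigenvalue of the normalized adjacency operator $T$ on $L^2_0(G)$. Decompose $L^2(G)=\bigoplus_{\chi}V_\chi$ according to the characters $\chi\in\widehat{A}$ through the right action of $A$ (functions with $f(ga)=\chi(a)f(g)$). Since $S$ acts on the left, $T$ preserves each $V_\chi$. Moreover, $G$ permutes these isotypic pieces through its conjugation action on $\widehat A$, and $T$ is compatible with this. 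The summand $V_1$ is $L^2(G/A)$, where the gap is $\gap(G/A,S)$. It therefore suffices to bound the gap on $V_\chi$ for nontrivial $\chi$, uniformly in terms of $\gap(G/A,S)$.

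\textbf{Step 1.} Suppose $f\in V_\chi$, $\chi\neq 1$, is almost invariant: $\|s f-f\|\le\eta\|f\|$ for all $s\in S$. I would push $f$ to $|f|^2$, which is $A$-invariant and hence lives on $G/A$. Expansion of $G/A$ then forces $|f|^2$ to be close to uniform. This gives equidistribution of the mass of $f$ over the cosets of $A$, with error $O(\eta/\gap(G/A,S))$.

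\textbf{Step 2.} Next I would exploit almost invariance under words of bounded length. Since $G/A$ expands with gap $\gamma$, every element of $G/A$ is approximately reached by random walks of length $O(\gamma^{-1}\log|G|)$. This is too long. Instead, I would use the ``affine conjugating trick'': for $g\in G$ and $s\in S$, commutator words $[g,s]$ and conjugates $a^{g}$ move $f$ by the character twist $\chi\mapsto\chi^{g}$. The hypothesis $\langle a^G\rangle=((a^{\pm1})^G)^c$ says that any element of the normal closure is a product of $c$ conjugates. Combined with quasirandomness, which forces every nontrivial orbit of $G$ on $\widehat A$ to have size $>|G|^\varepsilon$ (Lemma~\ref{lemQSet}), this should show the following. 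Averaging $f$ over conjugates yields an operator whose restriction to $V_\chi$ has norm at most $|G|^{-\varepsilon/2}$, by a Sarnak--Xue-type multiplicity argument. The argument runs like this. Any $G$-subrepresentation meeting $V_\chi$ nontrivially has dimension at least $|G|^{\varepsilon}$ (quasirandomness). Meanwhile the $\ell^2$ return probability of the walk of length $m\asymp\gamma^{-1}\log|G|$, restricted to the $A$-fibre, is small by the equidistribution from Step 1 and by the $c$-bounded width of conjugacy classes. Together these give $\lambda^{2m}\,|G|^\varepsilon\le |G|^{1-\varepsilon'}$. This is a flattening bound.

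\textbf{Step 3.} Conclude: $\lambda^{2m}\le |G|^{-\delta}$ with $m=O_{\varepsilon,c,|S|}(\gamma^{-1}\log|G|)$ gives $1-\lambda\gg\gamma$, as claimed.

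The main obstacle is Step 2: making precise that escape from a character $\chi$ happens in boundedly many steps, uniformly over $|A|$. The condition $\langle a^G\rangle=((a^{\pm1})^G)^c$ must be converted into a statement about the support of $\widehat{\mu^{(m)}}$ restricted to $A$. I would first try to reduce to the case where $A$ is a minimal normal subgroup (an $\mathbb{F}_p$-module). There, the dual action is linear, and large orbits give a Fourier-decay estimate. Failing that, I would take this step directly from \cite[Theorem~B]{GS25}, whose hypotheses are exactly the ones stated, after checking that their $\alpha_0$ may be taken as $|S|^{-1}$.
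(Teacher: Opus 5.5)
Falling back on \cite[Theorem~B]{GS25} with $\alpha_0=|S|^{-1}$ is exactly what the paper does. However, your direct argument has a genuine gap, and your hypothesis check for the citation is incomplete.

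\textbf{The gap in Step~2.} Step~2 is the only step with content, and it does not go through. The multiplicity half of the Sarnak--Xue count is fine. An eigenvector of $T$ in $V_\chi$ with $\chi\neq 1$ spans, under right translations, a $G$-representation that is nontrivial on $A$. So $\lambda$ has multiplicity at least $|G|^{\varepsilon}$, and $\lambda^{2m}|G|^{\varepsilon}\le |G|\,\mu^{(2m)}_{G,S}(1)$.

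The problem is that your only information about the walk comes from $G/A$:
\begin{itemize}
\item Step~1 controls $|f|$, which is constant on $A$-cosets and blind to the phase $\chi$.
\item Expansion of $G/A$ gives at best $\mu^{(2m)}_{G,S}(1)\le \mu^{(2m)}_{G/A,S}(1)$, and the right-hand side is never smaller than $|G/A|^{-1}$.
\end{itemize}
Hence the best you can get is $\lambda^{2m}\lesssim |A|\,|G|^{-\varepsilon}$, which says nothing once $|A|\ge |G|^{\varepsilon}$. By Corollary~\ref{corQChief} this happens for every non-central $A$. That is exactly the case in which the theorem is used: a minimal normal subgroup of $G/Z(G)$ in the proof of Theorem~\ref{thProjection}.

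What is missing is flattening of $\mu^{(m)}_{G,S}$ \emph{inside} the $A$-fibres. This is where the width hypothesis $\langle a^G\rangle=((a^{\pm1})^G)^c$ must be turned into a quantitative estimate. Your sketch asserts such an estimate but gives no mechanism for it, and it is the entire content of the Golsefidy--Srinivas theorem.

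\textbf{The hypothesis check.} The hypotheses of \cite[Theorem~B]{GS25} are not literally the ones stated here. As the paper's proof indicates, Golsefidy--Srinivas assume quasirandomness of the quotient $G/A$ together with a bound $|A|\le |G/A|^{O(1)}$. They deduce only afterwards that $G$ itself is quasirandom \cite[Lemma~17]{GS25}.

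So you need one line of translation. Since $G$ is perfect, $A$ is a proper subgroup. Therefore $G/A$ is $\varepsilon$-quasirandom and $|G/A|\ge |G|^{\varepsilon}$, i.e.\ $|A|\le |G/A|^{\varepsilon^{-1}-1}$. Every parameter of Theorem~B is then controlled by $\varepsilon$, $c$ and $\alpha_0=|S|^{-1}$, and the statement follows as in the paper.
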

\begin{proof}
This is \cite[Theorem~B]{GS25}.
As observed in \cite[Lemma~17]{GS25}, the group $G$ in the original statement is also quasirandom
(this fact can also be recovered from \cite[Lemma~4.1]{BS25}).
Conversely, if $G$ is $\varepsilon$-quasirandom then $G/A$ is $\varepsilon$-quasirandom
and $|G/A| \geq |G|^\varepsilon$, i.e. $|A| \leq |G/A|^{\varepsilon^{-1} -1}$.
As above, we can choose $\alpha_0 = |S|^{-1}$.
\end{proof}

\begin{remark} \label{remGS}
Theorems~C, D and E in the second half of \cite{GS25} can be deduced from Theorem~\ref{thProjection}.
In fact, the groups in the hypotheses of these theorems are $\varepsilon$-quasirandom for a fixed $\varepsilon>0$
(see \cite[Lemma~20]{GS25}, or \cite[Lemma~4.1]{BS25}).
\end{remark}

\subsection{The solvable radical}

In this subsection we prove that
the solvable radical of an $\varepsilon$-quasirandom group has $\varepsilon$-bounded derived length.
We first need the following two lemmas.

\begin{lemma}[Gr\"un's lemma] \label{lemGrun}
The quotient of a finite perfect group by its center has trivial center.
More precisely, if $G$ is a finite perfect group and $N \subseteq Z(G)$
with $N \unlhd N_0 \lhd G$ and $N_0/N \subseteq Z(G/N)$,
then $N_0 \subseteq Z(G)$.
\end{lemma}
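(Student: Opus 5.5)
The plan is to use the three subgroup commutator lemma in its standard consequence: if $H$ centralizes $[G,G]$ and $[H,G]\subseteq Z(\ldots)$, then $H$ centralizes $G'$. We work in the quotient $\bar G = G/N$ and write $\bar N_0 = N_0/N$. The hypothesis $N_0/N \subseteq Z(G/N)$ gives $[N_0,G]\subseteq N$, and since $N\subseteq Z(G)$ we get $[N_0,G,G] \subseteq [N,G] = 1$. It suffices to prove the second, more precise statement, because the first follows from it. Indeed, take $N=Z(G)$ and let $N_0$ be the full preimage of $Z(G/Z(G))$; the conclusion $N_0\subseteq Z(G)$ says exactly that $Z(G/Z(G))$ is trivial.

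For the precise statement, I would apply the three subgroup lemma to the triple $N_0, G, G$. Its conclusion is that $[G,G,N_0]$ lies in the normal closure of $[N_0,G,G]\,[G,N_0,G]$. Both of these commutators are trivial. We already have $[N_0,G,G]=1$, and $[G,N_0,G]=[[G,N_0],G]=[[N_0,G],G]=1$ by the same reasoning. Hence $[[G,G],N_0]=1$. Since $G$ is perfect, $[G,G]=G$, so $[G,N_0]=1$, which means $N_0\subseteq Z(G)$.

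To keep the argument self-contained, I would state the three subgroup lemma in the Hall--Witt form. For normal subgroups $X,Y,Z$ of $G$ it gives $[X,Y,Z]\subseteq [Y,Z,X][Z,X,Y]$, and here every subgroup involved ($N_0$ and $G$) is normal. There is also an elementwise alternative. For fixed $n\in N_0$, the map $g\mapsto [n,g]$ lands in $N\subseteq Z(G)$, and it is a homomorphism $G\to N$ because $[n,gh]=[n,h][n,g]^h=[n,g][n,h]$ by centrality. Its image is abelian, so it factors through $G/G'$, which is trivial. Therefore $[n,g]=1$ for all $g$. This second argument is cleaner and needs no Hall--Witt identity, so I would present it as the main proof and mention the three subgroup lemma only as context.

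The only point needing care is the homomorphism identity $[n,gh]=[n,h][n,g]^h$, for which the commutator convention must be fixed consistently. Once $[n,g]$ is central, the conjugation by $h$ is harmless, so there is no real obstacle. Finiteness is not even used; perfectness alone is enough.
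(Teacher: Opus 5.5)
Your proof is correct. The paper gives no argument of its own for this lemma and simply refers to \cite[Lemma~2.10]{BS25}, so your proposal supplies a self-contained proof where the paper relies on a citation.

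Your elementwise argument is the standard one, and every step checks out:
\begin{itemize}
\item For fixed $n\in N_0$, the hypothesis $N_0/N\subseteq Z(G/N)$ puts every $[n,g]$ in $N\subseteq Z(G)$.
\item The identity $[n,gh]=[n,h][n,g]^h$ (valid for the convention $[x,y]=x^{-1}y^{-1}xy$) then makes $g\mapsto [n,g]$ a homomorphism into the abelian group $N$.
\item This homomorphism factors through $G/G'$, which is trivial because $G$ is perfect, so $[n,g]=1$ for all $g$.
\end{itemize}
Your derivation of the first sentence is also right: take $N=Z(G)$ and let $N_0$ be the preimage of $Z(G/Z(G))$. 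The three subgroup variant works as well, since $[N_0,G,G]=[G,N_0,G]=1$ forces $[G',N_0]=1$, and hence $[G,N_0]=1$.

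Compared with the citation, your route costs a few lines. In return, the lemma becomes self-contained, and it is visible that finiteness plays no role: only $G=G'$ is used.

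One cosmetic point: your opening sentence (``if $H$ centralizes $[G,G]$ \dots then $H$ centralizes $G'$'') is circular as written. What you mean is that $[H,G,G]=1$ implies $[H,G']=1$, and that is exactly what you go on to prove.
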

\begin{proof}
See \cite[Lemma~2.10]{BS25}.
\end{proof}

\begin{lemma} \label{lemTwoSteps}
Let $G$ be a finite perfect group and let $R \lhd G$ be a solvable normal subgroup.
Assume that
$$ [R,G]=R' \qquad~\text{and}~\qquad [R',G]=R'' . $$
Then $R$ is abelian.
\end{lemma}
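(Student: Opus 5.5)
The plan is to show directly that $R' = R''$. Once that is done, solvability of $R$ finishes the argument: the derived series of a solvable group reaches $1$, so $R'=R''=R'''=\cdots$ forces $R'=1$, that is, $R$ is abelian. To get $R'=R''$ I will use the three subgroups lemma, with perfectness of $G$ providing the key input. Note that $R'$ and $R''$ are characteristic in $R$, hence normal in $G$, so we may work modulo $R''$.

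Recall the normal-subgroup form of the three subgroups lemma: if $X,Y,Z$ are subgroups of a group and $N$ is a normal subgroup with $[X,Y,Z]\subseteq N$ and $[Y,Z,X]\subseteq N$, then $[Z,X,Y]\subseteq N$. Here $[X,Y,Z]=[[X,Y],Z]$. I apply it with $N=R''$ and the triple $(R,G,G)$:
\begin{itemize}
\item $[R,G,G] = [[R,G],G] = [R',G] = R''$, using both hypotheses;
\item $[G,R,G] = [[G,R],G] = [R',G] = R''$, for the same reason, since $[G,R]=[R,G]$.
\end{itemize}
Hence $[G,G,R] = [[G,G],R]\subseteq R''$.

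Now perfectness enters: $[G,G]=G'=G$, so $[[G,G],R]=[G,R]=R'$. We conclude $R'\subseteq R''$, i.e.\ $R'=R''$, and the argument above gives $R$ abelian.

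The only step needing any care is the correct bookkeeping of which iterated commutators the hypotheses control. Both "outer" commutators $[R,G,G]$ and $[G,R,G]$ reduce to $[R',G]$, which is exactly the second hypothesis. I do not expect a genuine obstacle, since the three subgroups lemma applies verbatim for normal $N$.
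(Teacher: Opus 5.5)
Your proof is correct and is essentially the paper's argument. The paper works modulo $R''$, notes that $R'$ is central in $G$ and that $R/R'$ is central in $G/R'$, and then invokes Gr\"un's lemma to get $R$ central. You instead inline the standard three-subgroups-lemma proof of Gr\"un's lemma to reach $[G,R]\subseteq R''$ directly, and you make explicit the final solvability step ($R'=R''$ forces $R'=1$) that the paper leaves implicit in ``we can assume $R''=1$''.
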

\begin{proof}
We can assume that $R''=1$.
In particular $[R',G]=1$, i.e. $R' \subseteq Z(G)$.
Moreover, $R/R' \subseteq Z(G/R')$.
By Lemma~\ref{lemGrun} we obtain $R \subseteq Z(G)$,
so $R$ is abelian as desired.
\end{proof}

We are ready.

\begin{proposition} \label{propDL} 
If $G$ is $\varepsilon$-quasirandom,
then the derived length of the solvable radical of $G$ is at most $2 \varepsilon^{-1}$.
\end{proposition}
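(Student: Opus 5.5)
Let $R=\Sol(G)$ and write $R^{(i)}$ for its derived series, which is a chain of characteristic, hence normal, subgroups of $G$ ending at $1$. We always have $[R^{(i)},G]\supseteq [R^{(i)},R^{(i)}]=R^{(i+1)}$. The plan is to compare the derived series with the chain obtained by taking commutators with $G$, and to use Corollary~\ref{corQChief} to bound how often the two can differ.

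First, look at consecutive terms $R^{(i)} \supseteq R^{(i+1)}$. If $R^{(i)}/R^{(i+1)}$ is not central in $G/R^{(i+1)}$, then Corollary~\ref{corQChief} gives $|R^{(i)}:R^{(i+1)}|>|G|^\varepsilon$. Call such an index $i$ \emph{large}. Since $|R|\le |G|$ and the indices multiply, there are fewer than $\varepsilon^{-1}$ large indices. (Theorem~\ref{thBS} would give a sharper count, but the crude bound is enough.)

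Second, an index $i$ is not large exactly when $[R^{(i)},G]\subseteq R^{(i+1)}$. Combined with the reverse inclusion above, this means $[R^{(i)},G]=R^{(i+1)}$. Suppose two consecutive indices $i,i+1$ are both non-large, with $R^{(i+1)}\neq 1$. Apply Lemma~\ref{lemTwoSteps} to the perfect group $G$ and the solvable normal subgroup $R^{(i)}$. Its hypotheses are $[R^{(i)},G]=R^{(i+1)}=(R^{(i)})'$ and $[(R^{(i)})',G]=R^{(i+2)}=(R^{(i)})''$. The lemma then says $R^{(i)}$ is abelian, so $R^{(i+1)}=1$, a contradiction. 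Hence among the nontrivial steps of the derived series, no two consecutive ones are non-large. More precisely, if $d$ is the derived length, then among the indices $0,\dots,d-2$ no two adjacent ones are both non-large.

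Third, count. The indices $0,\dots,d-1$ are the steps where $R^{(i)}\ne R^{(i+1)}$. Fewer than $\varepsilon^{-1}$ of them are large, and the non-large ones are never adjacent except possibly involving the last index $d-1$. This gives $d\le 2(\#\text{large})+1$. Since the number of large indices is an integer strictly less than $\varepsilon^{-1}$, we get $d\le 2\varepsilon^{-1}$ after a small adjustment.

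The main obstacle is this endpoint bookkeeping. The pattern must alternate, with at most one extra non-large step at each end, so naively $d\le 2k+1$ where $k$ is the number of large steps. To get $2\varepsilon^{-1}$ I would check the last step. $R^{(d-1)}$ is abelian and normal in the perfect group $G$. If it is central, then the step $d-1$ being non-large, together with step $d-2$ non-large, again feeds Lemma~\ref{lemTwoSteps} and so is forbidden. I would also try to sharpen the count itself: use $|R|<|G|^{1-\varepsilon}$ from Theorem~\ref{thBS} (for large $|G|$), which gives $k<\varepsilon^{-1}-1$; or use the strictness in Corollary~\ref{corQChief}, which gives $|G|^{k\varepsilon}<|R|<|G|$. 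Either route yields $2k+1\le 2\varepsilon^{-1}$. For the first step, $i=0$, I would use that $G$ is perfect and, if needed, Gr\"un's lemma (Lemma~\ref{lemGrun}), to rule out a central initial layer beyond what the count already allows.
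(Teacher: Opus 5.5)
Your argument is the paper's proof: compare $[R^{(i)},G]$ with $R^{(i+1)}$, bound the number $k$ of non-central (``large'') factors with Corollary~\ref{corQChief}, forbid two consecutive central factors with Lemma~\ref{lemTwoSteps}, and conclude $d\le 2k+1$. Your second step already handles the pair $(d-2,d-1)$, because it only needs $R^{(d-1)}\neq 1$, which holds by definition of $d$. So the non-large indices in all of $0,\dots,d-1$ are pairwise non-adjacent, and $d\le 2k+1$ follows with no separate endpoint analysis. In particular nothing extra, such as Gr\"un's lemma, is needed at $i=0$.

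The real gap is the final count. The bound $k<\varepsilon^{-1}$ coming from $|R|\le|G|$ is \emph{not} enough, despite your parenthetical. Your ``strictness'' route does not improve it, since $|G|^{k\varepsilon}<|R|<|G|$ still only gives $k<\varepsilon^{-1}$. For example, $\varepsilon^{-1}=3.2$ permits $k=3$, and then $2k+1=7>6.4$.

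What you need is $|R|<|G|^{1-\varepsilon}$. You should not take this from Theorem~\ref{thBS}, which applies only when $|G|$ is large in terms of $\varepsilon$, while the proposition has no such hypothesis. The bound is elementary. A nontrivial perfect group is not solvable, so $R$ is a proper subgroup. Hence $G$ acts nontrivially on the cosets of $R$, and Lemma~\ref{lemQSet} gives $|G:R|>|G|^{\varepsilon}$. Then $|G|^{k\varepsilon}<|R|<|G|^{1-\varepsilon}$ yields $k<\varepsilon^{-1}-1$, so $d\le 2k+1<2\varepsilon^{-1}-1$. This is exactly how the paper finishes.
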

\begin{proof}
Let $R=\Sol(G)$ and let $(R^{(i)})_{i=0}^\ell$ be the derived series of $R=R^{(0)}$,
with $\ell \geq 1$ being the derived length.
Note that $R^{(i+1)} \subseteq [R^{(i)},G]$ for each $i$.
By Corollary~\ref{corQChief}, if
$$ [R^{(i)} , G ] \neq R^{(i+1)} $$
for some $i$, then
$$ |R^{(i)}:R^{(i+1)}| > |G|^\varepsilon . $$
It follows that if $m$ is the number of non-central factors $R^{(i)}/R^{(i+1)}$, then
$$ |G|^{1-\varepsilon} > |R| > |G|^{\varepsilon m} , $$
and so $m < \varepsilon^{-1} -1$.
Now suppose that two consecutive factors are central, namely
$$ [R^{(i)},G] = R^{(i+2)} \qquad \text{and} \qquad [R^{(i+1)},G] = R^{(i+2)} $$
for some $i \in \{0,\ldots,\ell-2\}$.
By Lemma~\ref{lemTwoSteps} we have that $R^{(i)}$ is abelian and so $R^{(i+2)} = R^{(i+1)}=1$,
which is a contradiction.
This implies that the number of central factors is at most $m+1$,
and it follows that $\ell \leq 2m+1 < 2 \varepsilon^{-1} -1$.
\end{proof}

We conclude this section with some examples.

\begin{example} \label{exQ}
The following are notable examples of quasirandom groups:
\begin{itemize}
\item[(a)] Let $G = \prod_{i=1}^n T_i$ be a direct product of perfect groups (for example finite simple groups).
	If $G$ is $\varepsilon$-quasirandom then for all $i$ we have $|T_i| \geq |G|^\varepsilon$ and
	$T_i$ is $\varepsilon$-quasirandom.
	Conversely if $|T_i| \geq |G|^\varepsilon$ and $T_i$ is $\varepsilon$-quasirandom for all $i$,
	then $G$ is $\varepsilon^2$-quasirandom.

\item[(b)] The affine group $G= (\mathbb{F}_p)^2 \rtimes \SL_2(p)$ is $0.19$-quasirandom for large $p$.
This is the group studied in \cite{LV16}
and in fact is a monolithic primitive group with socle $(\mathbb{F}_p)^2$.

\item[(c)] (Big number of generators.)
Let $V=(\mathbb{F}_p)^2$ and for $k \geq 1$ construct $G = V^k \rtimes \SL_2(p)$ with $\SL_2(p)$ acting diagonally.
Then $G$ is a crown-based power and is roughly $(2k+3)^{-1}$-quasirandom for large $p$.
On the other hand, it follows from \cite[Theorem~2.1]{DLRD15} that $d(G) = \lceil k/2 \rceil +1$.
This shows that the linear dependence on $\varepsilon^{-1}$
in Lemma~\ref{lemQGen} and so in Theorem~\ref{thExpRan} is necessary.

\item[(d)] (Big center.)
Let $p$ be an odd prime and let $P$ be the unique non-abelian group of order $p^3$ and exponent $p$.
Consider the semidirect product $G = P \rtimes \SL_2(p)$ with the natural action of $\SL_2(p)$ on $(\mathbb{F}_p)^2$.
Then $G$ is $0.16$-quasirandom for large $p$, but $|Z(G)| = p > |G|^{1/6}$.

\item[(e)] (Big derived length.)
	For each $L \geq 0$ there exist $\varepsilon(L) >0$ and
	infinitely many $\varepsilon$-quasirandom groups with solvable radical of derived length at least $L$.
	The following parallel examples are inspired by the papers \cite{BV12,Bra16}.
	Let $p \geq 5$ be a prime, $n \geq 1$,
	and let $G_{p,n} = \SL_2(\mathbb{Z}/p^n \mathbb{Z})$ or $G_{p,n} = \SL_2(\mathbb{F}_p[x]/(x^n))$.
	Then $G_{p,n}$ is a perfect extension of a solvable group $P_{p,n}$ of order $p^{3(n-1)}$ by $\SL_2(p)$.
	In particular, $G_{p,n}$ is $(3n)^{-1}$-quasirandom for all large $p$,
	and it can be checked that the derived length of $P_{p,n}$ grows with $n$ uniformly in $p$.

\item[(f)] The following split extensions are closely related
to the perfect semidirect products considered in \cite[Page~2367]{GS25}
(see also \cite[Example~4.7]{BS25}, where there is a minor mistake).
	Let $p \geq 5$ be a prime and $d \geq 2$.
	Let $H_{d,p}$ be the group of the $2d \times 2d$ upper triangular matrices whose diagonal entries
	are identical matrices in $\SL_2(p)$,
	and the upper entries are arbitrary $2 \times 2$ matrices over $\mathbb{F}_p$.
	For instance,
	$$
	H_{3,p} \> = \>
	\left \{ 
	\left(
	\begin{array}{ccc}
		g & * & *  \\
		0 & g & *  \\
		0 & 0 & g  \\
	\end{array}
	\right) : g \in \SLL(p) , \> * \in \mathrm{M}_2(\mathbb{F}_p)
	\right \} .
	$$
	If $U_{d,p}$ is the subgroup where $g =1$, then $|U_{d,p}|=p^{2d(d-1)}$ and $H_{d,p} = U_{d,p} \rtimes \SL_2(p)$.
	Now $H_{d,p}$ is not perfect, but its perfect core has shape $\widetilde{U}_{d,p} \rtimes \SL_2(p)$
	for some invariant subgroup $\widetilde{U}_{d,p}$,
	and is $(2d^2)^{-1}$-quasirandom for all large $p$.
\end{itemize}
\end{example}

\vspace{0.1cm}
\section{Generation of quasirandom groups} \label{secGen}

The purpose of this section is to prove
the following weak version of Theorem~\ref{thExpRan}.

\begin{proposition} \label{propRanGen} 
If $G$ is $\varepsilon$-quasirandom,
then $1 + \lfloor \varepsilon^{-1} \rfloor$ random elements generate $G$ with high probability.
\end{proposition}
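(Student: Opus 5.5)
The plan is to apply the Detomi--Lucchini factorization of Theorem~\ref{thDL} with $t = 1 + \lfloor \varepsilon^{-1} \rfloor$ and show that the product over non-Frattini chief factors tends to $1$ as $|G| \to \infty$. The reason this should work is that, in an $\varepsilon$-quasirandom group, the only chief factors that appear in the formula are large, and there are few of them.

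\textbf{Step 1: every non-Frattini chief factor is large.} Let $A = H/N$ be a non-Frattini chief factor of $G$. Since $G/N$ is perfect, Remark~\ref{remZF} gives $Z(G/N) \subseteq \Phi(G/N)$, so $A$ cannot be central in $G/N$. Corollary~\ref{corQChief} then yields $|A| > |G|^\varepsilon$.

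\textbf{Step 2: the multiplicities are bounded.} For each such $A$, the crown-based power $(L_A)_{\delta_A}$ is a quotient of $G$. Moreover, non-$G$-equivalent non-Frattini chief factors can be placed simultaneously in one chief series, and the total multiplicity $\sum_A \delta_A$ counts non-Frattini factors of that series. The product of their orders is at most $|G|$, and each exceeds $|G|^\varepsilon$. Hence
\[ \textstyle m := \sum_A \delta_A < \varepsilon^{-1}, \qquad\text{so}\qquad m \le \lfloor \varepsilon^{-1} \rfloor = t-1 , \]
and in particular the product in Theorem~\ref{thDL} has at most $t-1$ factors.

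\textbf{Step 3: the Lucchini--Morini hypothesis $t \ge d(L_A)$.} I would use the standard bound $d(X) \le \max\{2, d(X/M)+1\}$ for a minimal normal subgroup $M$ of $X$, together with the fact that $d$ is unchanged when passing over a Frattini chief factor. Induction along a chief series of $L_A$, which is a quotient of $G$ (for abelian $A$ because non-Frattini abelian factors are complemented), shows that $d(L_A)$ is at most one plus the number of non-Frattini factors of $L_A$. Each of these is again larger than $|G|^\varepsilon$, so $d(L_A) \le \lfloor\varepsilon^{-1}\rfloor + 1 = t$. Since $|A| > |G|^\varepsilon \to \infty$, Theorem~\ref{thLM} gives $P_{L_A,A}(t) \to 1$. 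Strictly speaking one needs this convergence to be uniform over the relevant family of monolithic primitive groups; I would extract that from the proof in \cite{LM02}.

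\textbf{Step 4: the correction terms (main obstacle).} It remains to show that the subtracted terms $(1+q_A+\cdots+q_A^{i-2})\gamma_A/|A|^t$ are $o(1)$ for $i \le \delta_A$.
\begin{itemize}
\item For non-abelian $A$ we have $q_A = 1$, so the term is at most $\delta_A \gamma_A |A|^{-t}$. Here $\gamma_A \le |\Aut(A)| \le |A|^{O(1)}$, and I expect a bound like $\gamma_A \le |A|^2$ to be available, since $C_{\Aut L}(L/A)$ essentially embeds in $A$ times an outer part. With $t \ge 2$ and $|A|\to\infty$ this term is $o(1)$.
\item For abelian $A$, one has $\gamma_A \le |A|\,|\End_L(A)| \le |A|^2$ (from derivations and the centralizer), and $q_A^{i-2} \le |A|^{\delta_A-2}$. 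So the term is $O(|A|^{\delta_A - t})$, which tends to $0$ because $\delta_A \le t-1$.
\end{itemize}
Since there are at most $\varepsilon^{-1}$ factors in the product, each equal to $1-o(1)$, we get $P_G(t) \to 1$.

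The delicate point is making this bookkeeping exact. This means pinning down $\gamma_A$ and $q_A$ so that the exponent really stays negative, and controlling $d(L_A)$ uniformly. If the crude $\gamma_A$ bound fails, I would instead compare $\widetilde P_{L,i}(t)$ with $P_{L_i}(t)/P_{L_{i-1}}(t)$ and bound the latter directly by counting non-generating $t$-tuples of $L_i$ through its maximal subgroups, which have index at least $|A| > |G|^\varepsilon$.
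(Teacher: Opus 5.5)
Steps 1--3 follow the paper's route: Theorem~\ref{thDL}, every non-Frattini chief factor exceeding $|G|^\varepsilon$, bounded multiplicities, then Theorem~\ref{thLM}. Your joint count $\sum_A\delta_A<\varepsilon^{-1}$ and your chief-series induction for $d(L_A)\le t$ are valid replacements for Lemmas~\ref{lemQCrown} and~\ref{lemQGen}.

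\textbf{The gap is in Step~4, for abelian $A$.} Write $L=L_A=A\rtimes H$ with $H=L/A$. The automorphisms of $L$ acting trivially on $H$ are the maps $ah\mapsto\phi(a)\,h\,\delta(h)$, where $\phi\in\End_L(A)^\times$ and $\delta$ is an \emph{arbitrary} $1$-cocycle $H\to A$. Hence $\gamma_A=|A|\,|\End_L(A)^\times|\,|H^1(H,A)|$. Your bound $\gamma_A\le|A|\,|\End_L(A)|$ counts only inner derivations and drops the cohomology factor. That factor can be as large as $|A|^{1/2}$, for example $H=\SL_2(2^n)$ with $n\ge2$ acting on its natural module. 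So your estimate $\gamma_A\le|A|^2$ is not supported by what you wrote. Your fallback of counting maximal subgroups of $L_i$ runs into the same factor, since the complements of $A$ in $L$ number $|A|\,|H^1(H,A)|$.

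The paper supplies exactly this in Lemma~\ref{lemComp}. Perfectness forces $\dim_{\End_L(A)}A\ge2$, so $|\End_L(A)|\le|A|^{1/2}$. Guralnick--Hoffman gives $|H^1(H,A)|\le|A|^{1/2}$. Together these give $\gamma_A<|A|^2$.

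The non-abelian case needs two corrections. First, you need $L_A=A$ simple (Lemma~\ref{lemQCrown}) before $|\Aut(A)|\le|A|^{O(1)}$ is available; that bound fails for $A=T^r$ with $r$ unbounded. Second, $\gamma_A\le|A|^2$ makes the correction term $o(1)$ only because $t\ge4$ (since $\varepsilon<1/3$), not merely because $t\ge2$.

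You can also close the gap without any cohomology bound by sharpening your own count:
\begin{itemize}
\item An automorphism of $L$ trivial on $L/A$ is determined by the images of a generating set of $L$, each lying in a prescribed coset of $A$. Hence $\gamma_A\le|A|^{d(L)}$.
\item Let $n(\cdot)$ denote the number of non-Frattini chief factors. Your Step~3 gives $d(L)\le 1+n(L)=2+n(H)$.
\item Crown-based powers are Frattini-free, and $L_k$ modulo a coordinate copy of $A$ is $L_{k-1}$. So the non-Frattini chief factors of the quotient $L_{\delta_A}$ of $G$ are $\delta_A$ copies of $A$ together with those of $H$. These are non-Frattini chief factors of $G$, so your Step~2 gives $\delta_A+n(H)\le t-1$.
\end{itemize}
Therefore, for $2\le i\le\delta_A$, the $i$-th correction term is at most $\delta_A\,q_A^{i-2}|A|^{d(L)-t}\le\delta_A|A|^{\delta_A+n(H)-t}\le\delta_A|A|^{-1}$. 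This uses only the trivial bound $q_A\le|A|$, and the same argument covers the non-abelian case as well.
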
 

We first obtain an even weaker, deterministic version.
The next proof does not use crown theory
but implements the multi-step process that we have described in the introduction,
which will be used again in Section~\ref{secExp}.
We recall that $d(G)$ is the minimum number of generators.

\begin{lemma} \label{lemQGen}
If $G$ is $\varepsilon$-quasirandom and sufficiently large with respect to $\varepsilon$,
then $d(G) \leq 1 + \varepsilon^{-1}$.
\end{lemma}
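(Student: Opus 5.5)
The plan is to run the multi-step process from the introduction. At each stage we pass to a quotient of $G$ while keeping track of how $d$ changes, and at the end we reach a direct product of simple groups. Set $G_0 = G$. Given a perfect quotient $G_i = G/N_i$, first replace it by $G_i/Z(G_i)$. By Remark~\ref{remZF}, $Z(G_i)$ lies in the Frattini subgroup of $G_i$, so $d(G_i) = d(G_i/Z(G_i))$. By Lemma~\ref{lemGrun}, $G_i/Z(G_i)$ has trivial center. If $\Sol(G_i/Z(G_i)) \neq 1$, choose a minimal normal subgroup $A$ of $G_i/Z(G_i)$ contained in its solvable radical. Then $A$ is elementary abelian, and it is non-central because the center is trivial. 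Let $G_{i+1}$ be the quotient by $A$. The process stops when the solvable radical is trivial.

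\textbf{Step 1: one generator per abelian step.} For $A$ a minimal normal abelian subgroup of a group $X$, I claim $d(X) \leq d(X/A) + 1$. Lift a minimal generating tuple of $X/A$ to elements $g_1,\dots,g_d$ and let $H = \langle g_1,\dots,g_d \rangle$, so that $HA = X$. Since $A$ is abelian, $H \cap A$ is normalized by both $H$ and $A$, hence is normal in $X$.
\begin{itemize}
\item If $H \cap A = A$, then $H = X$.
\item If $H \cap A = 1$, pick $1 \neq a \in A$. The group $\langle H, a\rangle$ contains the normal closure of $a$ under $HA = X$, which is $A$ by minimality, so $\langle H, a \rangle = X$.
\end{itemize}

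\textbf{Step 2: bounding the number of abelian steps.} Each $A$ removed corresponds to a section $H/N$ of $G$ with $N \subseteq H$ normal in $G$, $H/N$ non-central in $G/N$, and $H$ contained in the preimage of $\Sol(G)$. So by Corollary~\ref{corQChief} we have $|H:N| > |G|^\varepsilon$, measured against the original $G$ and not the current quotient. All these sections are successive factors inside $\Sol(G)$, since the central quotients are also solvable and lie in the solvable radical. Hence if $m$ is the number of abelian steps, Theorem~\ref{thBS} gives $|G|^{\varepsilon m} < |\Sol(G)| < |G|^{1-\varepsilon}$, so $m < \varepsilon^{-1} - 1$. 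Theorem~\ref{thBS} requires $|G|$ large, which is where that hypothesis enters. The process therefore ends after at most $m$ abelian steps with a quotient $\bar G$ having trivial solvable radical, and $d(G) \leq d(\bar G) + m$.

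\textbf{Step 3: the semisimple endpoint (main obstacle).} By Theorem~\ref{thBS}, $\bar G = T_1 \times \cdots \times T_k$ with $T_j$ simple of Lie type of bounded rank. Each $|T_j| \geq |G|^\varepsilon$ by quasirandomness, so $k \leq \varepsilon^{-1}$ and every factor is large. I would show $d(\bar G) = 2$ by grouping isomorphic factors. The power $T^r$ is $2$-generated as soon as $r$ is at most the number of $\Aut(T)$-orbits on generating pairs of $T$. This number is at least $P_T(2)|T|^2/|\Aut(T)|$, which tends to infinity for bounded-rank Lie type groups, since $|\mathrm{Out}(T)|$ is tiny compared with $|T|$ and $P_T(2) \to 1$. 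Generating pairs for non-isomorphic factors can then be combined coordinatewise: a subdirect product of pairwise non-isomorphic simple groups is the whole product by Goursat's lemma. This gives $d(G) \leq 2 + m < 1 + \varepsilon^{-1}$. The delicate point is making this two-generation of $\bar G$ uniform. If quoting the orbit count directly is unsatisfactory, I would instead apply Theorem~\ref{thLM} with $L = T$ and $A = T$ to each factor, together with a union bound over the boundedly many factors.
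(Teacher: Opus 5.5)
Your proposal is correct and follows the paper's proof: the same alternation of quotienting by the center (via Remark~\ref{remZF}) and by a non-central minimal abelian normal subgroup (via $d(X)\le d(X/A)+1$, which you verify), the bound $m<\varepsilon^{-1}-1$ from Corollary~\ref{corQChief} together with $|\Sol(G)|<|G|^{1-\varepsilon}$, and $2$-generation of the semisimple endpoint, where your Hall orbit count plus Goursat argument is the content the paper takes from its citation of Wiegold. The one flaw is in your fallback: applying Theorem~\ref{thLM} factor by factor with a union bound does not rule out diagonal subgroups of $T^r$ (you would also need the $|\Aut(T)|/|T|^2$ term for each pair of coordinates), but your main argument does not rely on that fallback.
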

\begin{proof}
We first show that $d(G/\Sol(G)) =2$.
By Theorem~\ref{thBS} we can assume that $G/\Sol(G) = \prod_{i=1}^n T_i^{\alpha_i}$
is a direct product of boundedly many finite simple groups.
Now it is easy to see that $d(G/\Sol(G)) = \max_i d(T_i^{\alpha_i})$.
Since $\alpha_i \ll_\varepsilon 1$ and $|T_i| \geq |G|^\varepsilon$ for each $i$,
the claim follows from \cite[Section~4]{Wie74}.

We start a process in which at each step we quotient the group by its center
and then by a non-central minimal abelian normal subgroup
(observe that this is always possible by Gr\"un's lemma).
By Remark~\ref{remZF} we have $d(G)=d(G/Z(G))$,
and it is easy to see that $d(G) \leq d(G/A) +1$ if $A$ is a minimal normal subgroup of $G$.
By Corollary~\ref{corQChief},
at each step of the process we are dividing the order of the solvable radical by more than $|G|^\varepsilon$,
and since $|\Sol(G)| < |G|^{1-\varepsilon}$ by quasirandomness,
we terminate in at most $\varepsilon^{-1}-1$ steps.
The proof follows.
\end{proof}

To prove a probabilistic version, we need two more results.
 We first describe the crown-based powers that are also quasirandom groups.

\begin{lemma} \label{lemQCrown}
Let $L$ be a monolithic primitive group with socle $A$,
and let $L_k$ be a crown-based power.
Suppose that $L_k$ is $\varepsilon$-quasirandom and sufficiently large with respect to $\varepsilon$.
If $A$ is non-abelian, then $L=A$ is a finite simple group and $L_k=A^k$.
Moreover, in any case we have
$$ |A| > |L_k|^\varepsilon \hspace{0.5cm} \text{ and } \hspace{0.5cm} k < \varepsilon^{-1} . $$
\end{lemma}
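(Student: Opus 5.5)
We treat the two claims separately. Throughout, write $L_k$ as an extension of $A^k$ by $L/A$, with $A^k$ embedded as $\{(a_1,\ldots,a_k)\}$ and $L_k$ acting on each coordinate through $L$.

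\emph{Non-abelian case.} Suppose $A$ is non-abelian, so $A = T^r$ for a non-abelian simple group $T$. Since $L$ is monolithic with non-abelian socle, $C_L(A)=1$, and $L$ embeds in $\Aut(A) = \Aut(T)\wr \Sym(r)$. The plan is to show $L/A$ is trivial and $r=1$, in three steps.

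\begin{itemize}
\item[(1)] Quasirandomness of $L_k$ makes $L_k$ perfect, hence its quotient $L/A$ is perfect and $\varepsilon$-quasirandom, with $|L/A|\geq |L_k|^\varepsilon$ if nontrivial.
\item[(2)] $L$ acts transitively on the $r$ simple factors of $A$. This gives a homomorphism $L_k \to L \to \Sym(r)$. By Lemma~\ref{lemQSet}, if the action is nontrivial then $r > |L_k|^\varepsilon$. But $|L_k| \geq |A| \geq 60^r$, so this fails when $|L_k|$ is large. Hence the action is trivial, and transitivity forces $r=1$.
\item[(3)] Now $A=T$ is simple and $L \le \Aut(T)$, so $L/A$ embeds in $\mathrm{Out}(T)$, which is solvable by Schreier's conjecture (CFSG). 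Being perfect and solvable, $L/A$ is trivial. Thus $L=A$ and $L_k=A^k$.
\end{itemize}

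\emph{The bound $|A| > |L_k|^\varepsilon$.} In the abelian case, $A$ is an irreducible nontrivial $L/A$-module (nontrivial since $L$ is primitive with abelian socle, so it is not central). The first coordinate projection $L_k\to L$ gives a nontrivial conjugation action of $L_k$ on $A$. By Corollary~\ref{corQChief} (or Lemma~\ref{lemQSet}) applied to this action, $|A| > |L_k|^\varepsilon$. In the non-abelian case $L_k = A^k$, and Example~\ref{exQ}(a) gives $|A|\geq|L_k|^\varepsilon$ directly; for strictness, use the coset action of $A^k$ on $A^{k-1}$ (via Lemma~\ref{lemQSet}), or the conjugation action of $A$ on itself if $k=1$.

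\emph{The bound $k<\varepsilon^{-1}$.} Suppose $k \geq \varepsilon^{-1}$. Then $|L_k| \geq |A|^k > |L_k|^{\varepsilon k} \geq |L_k|$, a contradiction. So $k<\varepsilon^{-1}$, with strictness coming from the strict inequality $|A|>|L_k|^\varepsilon$.

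\emph{Main obstacle.} The delicate step is the strict inequality $|A|>|L_k|^\varepsilon$ in the non-abelian case, where we must exhibit a genuinely nontrivial action of size at most $|A|$. If $k\ge2$, the projection $A^k\to A$ onto one coordinate has kernel $A^{k-1}$, but the cosets of that kernel number $|A|$. The action on those $|A|$ cosets is nontrivial, so Lemma~\ref{lemQSet} gives $|A| > |L_k|^\varepsilon$, which is exactly what is needed. If $k=1$, then $L_k = A$ and the inequality is immediate from $\varepsilon<1$ (see Section~\ref{secPre}). The rest is routine.
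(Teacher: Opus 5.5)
Your proof is correct, but in the non-abelian case it takes a different route from the paper. The paper observes that $\Sol(L)=1$ and applies Theorem~\ref{thBS} to the $\varepsilon$-quasirandom quotient $L$, so $L$ is a direct product of simple groups. Monolithicity then makes $L$ simple, hence $L=A$. You bypass the Barbieri--Sabatini structure theorem. Lemma~\ref{lemQSet} together with $|A|\ge 60^r$ forces the transitive action on the $r$ simple factors to be trivial, so $r=1$. Schreier's conjecture together with perfectness of $L/A$ then removes the outer part. Your version is more self-contained and makes the largeness threshold explicit: you only need $|L_k|^\varepsilon>\log_{60}|L_k|$. The paper's version is a two-line consequence of a theorem it relies on anyway.

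For $|A|>|L_k|^\varepsilon$ the paper does not split into cases. The first-coordinate copy $\{(a,1,\ldots,1)\}$ of $A$ is normal and non-central in $L_k$ in both cases, since a non-abelian $A$ is centerless. So Corollary~\ref{corQChief} applies directly. The ``main obstacle'' you single out is therefore not really one, and your detour through Example~\ref{exQ}(a) and coset actions is correct but unnecessary. The bound $k<\varepsilon^{-1}$ is derived exactly as in the paper.

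One justification needs fixing. The claim ``primitive with abelian socle, so it is not central'' is false as stated: $C_p$ is a monolithic primitive group with central socle. Use perfectness of $L$, inherited from $L_k$, instead. Suppose $A\le Z(L)$, and pick a maximal subgroup $M$ with $L=AM$, which exists because $\Phi(L)=1$. Then $M$ is normal in $L$ and $M\cap A=1$. Monolithicity forces $M=1$, so $L=A$ would be abelian, contradicting perfectness.
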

\begin{proof}
In general, note that $L_k$ has a non-central normal subgroup isomorphic to $A$,
and so $|A| > |L_k|^\varepsilon$ by Corollary~\ref{corQChief}.
Since $L_k$ also contains a copy of $A^k$, we have $|L_k|^{k \varepsilon} < |A|^k \leq |L_k|$,
which gives $k < \varepsilon^{-1}$.

If $A$ is not abelian, then $\Sol(L)=1$ and $L$ is the direct product of simple groups from Theorem~\ref{thBS}.
Since $L$ is monolithic, it is actually simple.
\end{proof} 

Next, we deal with the parameters $\gamma_A$ and $q_A$ in the definition of $\widetilde{P}_{L,i}(t)$.

\begin{lemma} \label{lemComp}
Let $L$ be a perfect monolithic primitive group with abelian socle $A$.
Then $|\End_L(A)| \leq |A|^{1/2}$, and $|C_{\Aut(L)}(L/A)| < |A|^2$.
\end{lemma}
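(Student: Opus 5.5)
The plan is to pass to the standard description of $L$. Since $A$ is an abelian minimal normal subgroup and $\Phi(L)=1$, $A$ is complemented: $L = A \rtimes H$ with $H \cong L/A$. Here $A$ is an elementary abelian $p$-group, which we view as a faithful irreducible $\mathbb{F}_pH$-module $V$. Faithfulness holds because $C_L(A)=A$ for a primitive monolithic group with abelian socle. Moreover $H$ is perfect, being a quotient of the perfect group $L$, and $H\neq 1$, since otherwise $L=A$ would be abelian and perfect, hence trivial.

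For the first inequality, $\End_L(A)=\End_H(V)$ is a finite division ring by Schur's lemma, hence a field $\mathbb{F}_q$. Then $V$ is an $\mathbb{F}_qH$-module with $|A| = q^m$, where $m=\dim_{\mathbb{F}_q}V$, so it suffices to show $m\geq 2$. If $m=1$, then $H$ embeds in $\GL_1(q)=\mathbb{F}_q^\times$, which is abelian. Since $H$ is perfect this forces $H=1$, a contradiction. Hence $q\leq |A|^{1/2}$.

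For the second inequality, let $C=C_{\Aut(L)}(L/A)$. Every $\varphi\in C$ preserves $A$, which is characteristic as the unique minimal normal subgroup. We have $\varphi(h)\equiv h \bmod A$ for every $h$, and conjugation by elements of $A$ is trivial on $A$. Hence $\varphi|_A$ commutes with the $L$-action, so restriction gives a homomorphism $C\to \End_L(A)^\times=\mathbb{F}_q^\times$. The image has size at most $q-1<|A|^{1/2}$. The kernel $K$ consists of automorphisms trivial on both $A$ and $L/A$. Writing $\varphi(h)=\delta(h)h$, such a $\varphi$ is determined by $\delta|_H$, and the automorphism condition makes $\delta|_H$ a $1$-cocycle $H\to V$. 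Conversely every cocycle gives such an automorphism, so $|K|=|Z^1(H,V)|=|B^1(H,V)|\cdot|H^1(H,V)|$. Since $V$ is irreducible and nontrivial, $C_V(H)=0$, and so $|B^1(H,V)|=|V|=|A|$. Therefore
$$ |C| \> \leq \> (q-1)\,|A|\,|H^1(H,V)| , $$
and it remains to show $|H^1(H,V)|\leq |A|^{1/2}$.

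This last bound is the main obstacle. I would invoke the known bound for faithful irreducible modules, $\dim H^1(H,V)\leq \tfrac12 \dim V$ (Guralnick-type results; Aschbacher--Guralnick give the weaker $|H^1|<|V|$). With it we get $|C|<|A|^{1/2}\cdot |A|\cdot |A|^{1/2}=|A|^2$.

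If only $|H^1(H,V)|<|V|$ is available, I would instead try to sharpen the count of the $\mathbb{F}_q^\times$-part. Alternatively, I would use perfectness of $H$ via the inflation-restriction sequence with respect to a minimal normal subgroup of $H$ to gain the missing factor $|A|^{1/2}$. If that fails, I would exploit that for $L$ perfect the relevant cohomology is constrained by $d(L)$ together with $m\geq2$.
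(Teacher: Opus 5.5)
Your proposal is correct and follows the paper's proof essentially step for step. Schur's lemma and Wedderburn give $\End_L(A)\cong\mathbb{F}_q$, perfectness of $L/A$ forces $\dim_{\mathbb{F}_q}A\geq 2$, and then $|C_{\Aut(L)}(L/A)|\leq |A|\cdot|\End_L(A)^\times|\cdot|H^1(L/A,A)|$ combined with the Guralnick--Hoffman bound $|H^1(L/A,A)|\leq |A|^{1/2}$ for faithful irreducible modules, which is exactly the result the paper cites. Your restriction-map and cocycle count only makes explicit the factorization the paper states without proof, so the fallback strategies in your last paragraph are unnecessary.
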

\begin{proof}
 We have that $A$ is elementary abelian of order $p^n$ for some prime $p$ and $n \geq 1$.
Thus we may regard $A$ as a vector space over $\mathbb{F}_p$ of dimension $n$.
The conjugation action of $L$ on $A$ induces a faithful irreducible action of
$L/A$ on $A$, and $A$ is an irreducible $\mathbb{F}_p[L/A]$-module.
Now the centralizer of $L/A$ in $\Aut(A)$ consists precisely of the invertible
$L$-endomorphisms of $A$, namely $C_{\Aut(A)}(L/A) = \End_L(A)^\times$.

By Schur's lemma and Wedderburn's little theorem, $\End_L(A)$ is a finite field,
say $\End_L(A) \cong \mathbb{F}_{p^r}$ for some $r \geq 1$.
Therefore $\End_L(A)^\times \cong \mathbb{F}_{p^r}^\times$.
As $A$ is a $\mathbb{F}_{p^r}$-vector space, if $m=\dim_{\mathbb{F}_{p^r}}(A)$, then $n=mr$.
If $m=1$, then $\Aut_{\mathbb{F}_{p^r}}(A)$ is abelian and so is $L/A$,
against the assumption that $L$ is perfect.
Thus $m \geq 2$ and so $p^r \leq p^{n/2} = |A|^{1/2}$.

Finally we have $|C_{\Aut(L)}(L/A)| = |A| |\End_L(A)^\times| |H^1(L/A,A)|$.
By \cite[Theorem~1]{GH98} we deduce $|H^1(L/A,A)| \leq |A|^{1/2}$ and the proof follows.
\end{proof}

We are ready to implement Detomi--Lucchini theorem.

\begin{proof}[Proof of Proposition~\ref{propRanGen}] 
Let $G$ be $\varepsilon$-quasirandom and let $t= 1+ \lfloor \varepsilon^{-1} \rfloor$.
By Theorem~\ref{thDL}, our goal is to prove that 
$$ \prod_{A} \left( \prod_{1 \leq i \leq \delta_A(G)} \widetilde{P}_{L_A,i}(t) \right) 
\> \to \> 1 $$
when $|G| \to \infty$,
where $A$ runs in the set of irreducible $G$-groups $G$-equivalent to a non-Frattini
chief factor of $G$,
and $L_A$ is the monolithic primitive group associated with $A$.

By Corollary~\ref{corQChief}, for each non-Frattini chief factor $A$ we have $|A| > |G|^\varepsilon$.
In particular, there are at most $\varepsilon^{-1}$ such factors.
Therefore, it is sufficient to prove that for each $A$ we have
$$ \prod_{1 \leq i \leq \delta_A(G)} \widetilde{P}_{L_A,i}(t)  \> \to \> 1 $$
when $|A| \to \infty$.
For the associated monolithic primitive group $L=L_A$,
the crown-based power $L_{\delta_A(G)}$ appears as a nontrivial quotient of $G$.
It follows that $L_{\delta_A(G)}$ is $\varepsilon$-quasirandom,
so $\delta_A(G) \leq \varepsilon^{-1}$ by Lemma~\ref{lemQCrown}.
Therefore it is enough to prove that for all $A$ and $i \leq \varepsilon^{-1}$
we have $\widetilde{P}_{L_A,i}(t)  \to 1$ uniformly.
Since $t \geq d(L_A)$ by Lemma~\ref{lemQGen},
by the definition of $\widetilde{P}_{L_A,i}(t)$ and Theorem~\ref{thLM},
it is actually sufficient to show that, for all $A$ we have
$$ \frac{\gamma_A \, (q_A)^{1/\varepsilon}}{|A|^{1+ \lfloor \varepsilon^{-1} \rfloor}} \> \to \> 0 $$
when $|A| \to \infty$,
where $\gamma_A = |C_{\Aut(L)}(L/A)|$ and $q_A = |\End_L(A)|$ if $A$ is abelian and $q_A =1$ otherwise.

If $A$ is abelian, then by Lemma~\ref{lemComp} we have
$\gamma_A \, (q_A)^{1/\varepsilon} < |A|^{2+\varepsilon^{-1}/2}$
and we are done because we can assume $\varepsilon < 1/3$,
and so $\lfloor \varepsilon^{-1} \rfloor - \varepsilon^{-1}/2 >1$.
Otherwise $L=A$ by Lemma~\ref{lemQCrown} and we have $\gamma_A =|\Aut(A)|$ where $A$ is a finite simple group.
In this case it is well known that $|\Aut(A)| \leq |A|^{d(A)} = |A|^2$ and the proof is complete.
\end{proof}

\vspace{0.1cm}
\section{Diameter of quasirandom groups}  \label{secDia}

Obtaining an upper bound for the worst-case diameter of a finite group is in general a difficult problem.
A fruitful strategy first devised by Helfgott \cite{Hel08} to get polylogarithmic bounds consists
in proving a ``growth'' theorem of the following type:
\emph{If $\SL_2(p) = \langle A \rangle$,
then either $|A^3| \geq |A|^{1+\delta}$ for some fixed $\delta>0$ or $A^3=\SL_2(p)$.}
Unfortunately, a statement of this type fails already in $\SL_2(p) \times \SL_2(p)$,
as $A$ might have the form $A_1 \times A_2$ with $A_1$ small and $A_2$ large in $\SL_2(p)$.
In the recent paper \cite{EMST26},
the authors obtain a general-purpose polylog-type diameter bound for arbitrary finite groups.
That bound depends on the maximal exponent of a normal abelian section.
As stressed in Example~\ref{exQ}(b-d),
there exist quasirandom groups with problematic abelian normal subgroups,
or even large central cyclic subgroups,
so a different method is required to prove Theorem~\ref{thDiam}.

If $G$ is a finite group and $S \subseteq G$, we write $\ell_S$ for the length function with respect to $S$,
i.e. if $g \in G$ then $\ell_S(g)$ is the length of the minimal representation of $g$ as a product of elements in $S \cup S^{-1}$.
    For a subset $X \subseteq G$ we write $\ell_S(X) = \max_{x \in X} \ell_S(x)$.
    Therefore $\diam(G, S) = \ell_S(G)$, and
    $$ \diam(G) = \max \{\diam(G, S) : S~\text{a generating set for}~G\} . $$
    
    The following tools are basic.

\begin{lemma}[Basic diametry] \label{lemDiamTools}
Let $G$ be a finite group. Then
\begin{itemize}
\item[(i)] For $S, X, Y \subseteq G$ we have
   $$ \ell_S(Y) \> \leq \> \ell_S(X) \ell_X(Y) . $$
    
    \item[(ii)] Suppose $K \unlhd L \unlhd H \leqslant G$. Then
    $$ \ell_S(H/K) \> \leq \> \ell_S(H/L) + \ell_S(L/K) . $$
    
    \item[(iii)] If $G = \langle S \rangle$ and $N \unlhd G$,
   then there exists $X \subseteq N$ such that $N = \langle X \rangle$ and $\ell_S(X) \leq 2 \diam(G/N) + 1$.
\end{itemize}
\end{lemma}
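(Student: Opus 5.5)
Throughout, I read $\ell_S(H/K)$, for $K \unlhd H \leqslant G$, as the maximum over the cosets $hK \subseteq H$ of the minimal $S$-length of an element of $hK$, that is, $\ell_S(H/K)=\max_{h\in H}\min_{k\in K}\ell_S(hk)$. In particular $\ell_S(H/1)=\ell_S(H)$. All three items are elementary, and I would prove them in order.

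For (i), fix $y \in Y$ and write $y = x_1^{\pm1}\cdots x_m^{\pm1}$ with $x_j \in X$ and $m=\ell_X(y)\le \ell_X(Y)$. Each $x_j$ is a word of length at most $\ell_S(X)$ in $S\cup S^{-1}$. Since $\ell_S(x^{-1})=\ell_S(x)$, the same bound holds for each $x_j^{-1}$. Concatenating these words gives $\ell_S(y)\le m\,\ell_S(X)$, and taking the maximum over $y \in Y$ gives (i).

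For (ii), fix $h \in H$.
\begin{itemize}
\item Choose $a \in hL$ with $\ell_S(a)\le \ell_S(H/L)$.
\item Then $a^{-1}h \in L$, so we may choose $b \in a^{-1}hK$ with $\ell_S(b)\le \ell_S(L/K)$.
\item Now $ab \in a\,a^{-1}hK = hK$, and $\ell_S(ab)\le \ell_S(a)+\ell_S(b)$.
\end{itemize}
Hence every coset of $K$ in $H$ contains an element of length at most $\ell_S(H/L)+\ell_S(L/K)$, which is (ii).

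For (iii), I would use Schreier's lemma. Let $d=\diam(G/N,S)$, where $S$ here denotes its (generating) projection to $G/N$; by definition $d\le \diam(G/N)$. Choose a transversal $T$ of $N$ in $G$ with $1 \in T$ by taking, for each coset, a representative of minimal $S$-length. By the definition of $d$, every $t\in T$ satisfies $\ell_S(t)\le d$. For $g\in G$ write $\overline{g}$ for the element of $T$ in $gN$.

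Schreier's lemma then gives $N=\langle X\rangle$ with
$$X=\{\, ts\,(\overline{ts})^{-1} : t\in T,\ s\in S\,\},$$
and each such element has $S$-length at most $d+1+d=2d+1\le 2\diam(G/N)+1$. The generators of the form $ts^{-1}(\overline{ts^{-1}})^{-1}$ are not needed, because $S$ generates $G$ as a group. The only point requiring care is this choice of transversal (minimal-length representatives with $1\in T$) so that the Schreier generators are short; everything else is routine.
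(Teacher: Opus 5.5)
Your proof is correct: (i) and (ii) follow by concatenating words, and (iii) is Schreier's lemma with a transversal of minimal-length coset representatives, which gives exactly the bound $2\diam(G/N)+1$. The paper gives no argument of its own and simply defers to \cite[Section~2]{EMST26}, where these facts come from essentially this standard Schreier-type reasoning.
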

\begin{proof}
See \cite[Section~2]{EMST26}.
\end{proof}

The next growth-type result is due to Eberhard--Murphy--Pyber--Szab\'o \cite{EMPS25}.
In their paper they use a different notation where a group is called $K$-quasirandom
if every nontrivial irreducible representation has degree at least $K$.
Using the results in Section~\ref{secGen},
we recover the following version without any dependence on the size of a generating set.

\begin{lemma}[Affine conjugating trick] \label{lemACT}
Let $G$ be an $\varepsilon$-quasirandom group acting on an abelian group $A$.
If $X \subseteq A$ is a $G$-invariant symmetric generating set for $A$,
then
$$ \frac{|X^3|}{|X|} > |G|^{\varepsilon/21} \qquad \text{or} \qquad [A,G] \subseteq X^{14/\varepsilon} . $$
\end{lemma}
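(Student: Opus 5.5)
We reduce to the Eberhard--Murphy--Pyber--Szab\'o growth lemma, which works in the language of $K$-quasirandomness and whose conclusion depends on the size of a generating set of the acting group. The statement to invoke says roughly the following. Let $G$ be generated by a set $T$ of size $d$, and suppose every nontrivial irreducible representation of $G$ has degree at least $K$. Then for a $G$-invariant symmetric generating set $X$ of $A$, either $|X^3| \geq K^{c/d}|X|$, or $[A,G]$ is contained in a bounded power $X^{O(d)}$. Here the exponent $c$ and the implied constants are absolute.

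The first step is to fix a generating set of $G$ of controlled size. If $|G|$ is large with respect to $\varepsilon$, Lemma~\ref{lemQGen} gives $d(G) \leq 1+\varepsilon^{-1}$; choose such a generating set $T$. If $|G|$ is small, take $K=|G|^\varepsilon$, so that $K$ is bounded. This case is absorbed by adjusting the implicit constants, or directly: if $|X^3|\le |G|^{\varepsilon/21}|X|$ fails trivially, a crude argument works since $A$ has bounded action. I would want to check whether the paper's constants actually permit this, or whether ``sufficiently large'' is tacitly assumed.

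The core of the argument is the trick itself, which I would re-derive in order to track constants. Since $X$ is $G$-invariant, for $a\in X$ and $g\in T$ we have $a^{-1}a^g = [a,g] \in X^2$. Consider the set $Y=\{[a,g]: a\in X\}$ for fixed $g$. If $X$ does not grow, then Pl\"unnecke--Ruzsa keeps all bounded sumsets $kX$ of size at most $(|X^3|/|X|)^{O(k)}|X|$. One then runs the EMPS argument: the subgroup generated by $X^2\cap[A,G]$ is $G$-invariant, and the sets $(g-1)X$ for $g\in T$ together generate $[A,G]=\sum_{g\in T}(g-1)A$. A Gowers-type quasirandomness step, using large minimal degree $K=|G|^\varepsilon$, shows that a non-growing $G$-invariant set must fill its coset structure in boundedly many steps. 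Summing over the $d\le 1+\varepsilon^{-1}$ generators costs a factor $d$ in the exponent, and this produces $14/\varepsilon$ as roughly $7(1+\varepsilon^{-1})$ after using $\varepsilon<1/3$. Likewise, the growth threshold $K^{c/d}$ with $K=|G|^\varepsilon$ and $d\le 1+\varepsilon^{-1}$ becomes $|G|^{\varepsilon/21}$ once we arrange $c/d\ge 1/21$. With $c/d\ge c\varepsilon/(1+\varepsilon)$, this needs the EMPS constant.

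The main obstacle is the constant bookkeeping: matching the EMPS exponents exactly to $1/21$ and $14/\varepsilon$. Concretely, we have to verify that their bound is linear in $d$ and substitute $d\le 1+\lfloor\varepsilon^{-1}\rfloor\le \tfrac{4}{3}\varepsilon^{-1}$, or similar. I would first try quoting their lemma verbatim and plugging in these bounds. Only if the shape does not match would I reprove the trick with the generating set $T$ made explicit.
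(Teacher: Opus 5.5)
Your plan is the same as the paper's. Its proof is just Lemma~1.8 of Eberhard--Murphy--Pyber--Szab\'o, applied with $K=|G|^\varepsilon$ and with $d(G)\le 1+\varepsilon^{-1}\le 2\varepsilon^{-1}$ from Lemma~\ref{lemQGen}.

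However, the shape you guess for that lemma is wrong, and with your shape the bookkeeping cannot close. The only substitutions are $K=|G|^{\varepsilon}$ and $d\le 2\varepsilon^{-1}$, so the constants in the statement force the cited lemma to read, in effect, ``$|X^3|\ge K^{1/21}|X|$ or $[A,G]\subseteq X^{7d}$''. The number of generators enters only the word length, which gives $7\cdot 2\varepsilon^{-1}=14/\varepsilon$ as you correctly compute. It does not enter the growth factor.

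If the growth factor were $K^{c/d}$ as you posit, you would only get $|G|^{c\varepsilon/d}\ge |G|^{c\varepsilon^2/2}$. ``Arranging $c/d\ge 1/21$'' is then impossible for an absolute $c$ once $\varepsilon$ is small, since your own bound $c\varepsilon/(1+\varepsilon)$ tends to $0$. So the step to carry out really is the verbatim quotation. Your sketched re-derivation, with its unspecified ``Gowers-type step'' making a non-growing invariant set fill its cosets, is unnecessary and, as written, is not a proof.

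Your treatment of small $|G|$ also does not work. The constants $21$ and $14$ are explicit, so there is nothing to absorb. Boundedness of $G$ also gives no control on the size of $A$, or on whether $[A,G]\subseteq X^{14/\varepsilon}$.

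The paper does not argue the small case either. By invoking Lemma~\ref{lemQGen}, which carries a largeness hypothesis, it tacitly assumes $|G|$ is large in terms of $\varepsilon$. This is harmless, because the lemma is only used asymptotically, in Theorem~\ref{thDiam} and in the proof of Theorem~\ref{thProjection}. You should state that assumption explicitly rather than claim the small case is trivial.
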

\begin{proof}
See \cite[Lemma~1.8]{EMPS25},
and use that $G$ can be generated by at most $1 + \varepsilon^{-1} \leq 2 \varepsilon^{-1}$ elements
by Lemma~\ref{lemQGen}.
\end{proof}

Lemma~\ref{lemACT} is empty if $G$ acts trivially on $A$,
and in fact we need a different argument to deal with central elements.
The proof of the following nice fact uses basic character theory.

\begin{lemma}[Commutator width] \label{lemWid}
If $G$ is $\varepsilon$-quasirandom,
then every element of $G$ is the product of at most $\varepsilon^{-1}$ commutators.
\end{lemma}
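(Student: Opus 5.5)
The plan is to use the classical Frobenius formula for counting solutions of products of commutators. For $g \in G$ and $k \ge 1$, the number of tuples $(x_1,y_1,\ldots,x_k,y_k) \in G^{2k}$ with $[x_1,y_1]\cdots[x_k,y_k]=g$ equals
$$ N_k(g) \> = \> |G|^{2k-1} \sum_{\chi \in \Irr(G)} \frac{\chi(g)}{\chi(1)^{2k-1}} . $$
This follows from the single-commutator identity $\sum_{x,y}\chi([x,y])/\chi(1) = |G|^2/\chi(1)^2$ together with convolution of class functions. I will take it as standard character theory; if needed, I would rederive it from the fact that the map $g\mapsto \#\{(x,y):[x,y]=g\}$ equals $|G|\sum_\chi \chi(g)/\chi(1)$.

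To show $N_k(g)>0$ for $k=\lceil \varepsilon^{-1}\rceil$, or rather for $k$ with $k \le \varepsilon^{-1}$ as the statement requires, I will separate the trivial character, which contributes $1$. The rest is bounded using $|\chi(g)|\le \chi(1)$:
$$ \Big|\sum_{\chi\ne 1} \frac{\chi(g)}{\chi(1)^{2k-1}}\Big| \> \le \> \sum_{\chi \neq 1} \frac{1}{\chi(1)^{2k-2}} \> = \> \sum_{\chi\ne1} \frac{\chi(1)^2}{\chi(1)^{2k}} \> < \> \frac{|G|}{m^{2k}}, $$
where $m \ge |G|^\varepsilon$ is the minimal nontrivial degree. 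Here I used $\sum_\chi \chi(1)^2=|G|$, together with the fact that the trivial term is removed, so the inequality is strict. If $2k\varepsilon \ge 1$, the error is $<1$ and $N_k(g)>0$. So $k=\lceil (2\varepsilon)^{-1}\rceil$ commutators suffice. This is at most $\varepsilon^{-1}$ whenever $\varepsilon\le 1/2$, which always holds since $\varepsilon<0.35$ by the remark in Section~\ref{secPre}. For the trivial group the claim is vacuous.

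The main point to double-check is the exponent bookkeeping, namely $\chi(1)^{2k-2}$ versus $\chi(1)^{2k}$, and the strict inequality. The strictness comes from the missing trivial term, $\sum_{\chi\ne1}\chi(1)^2=|G|-1<|G|$, which settles the borderline case $2k\varepsilon=1$. If one only wanted a crude bound, using $k=\lceil \varepsilon^{-1}/2\rceil\le\varepsilon^{-1}$ is already comfortably within the stated $\varepsilon^{-1}$.
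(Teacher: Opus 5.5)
Your proposal is correct and follows essentially the paper's route: the Frobenius criterion that $g$ is a product of $k$ commutators iff $\sum_{\chi}\chi(g)/\chi(1)^{2k-1}>0$, combined with $|\chi(g)|\le\chi(1)$ and the quasirandom lower bound on nontrivial degrees. The only difference is in bounding the nontrivial terms. The paper bounds each term by $|G|^{-2\varepsilon(k-1)}$ and uses that there are fewer than $|G|$ characters, which requires $k-1\ge(2\varepsilon)^{-1}$. You instead use $\sum_{\chi\neq1}\chi(1)^2=|G|-1$, which gives the slightly sharper bound $|G|^{1-2k\varepsilon}$ and hence $\lceil(2\varepsilon)^{-1}\rceil\le\varepsilon^{-1}$ commutators with cleaner bookkeeping.
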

\begin{proof}
Fix $g \in G$.
A classical result of Frobenius says that $g$ is a commutator if and only if
$$ \sum_{\chi \in \Irr(G)} \frac{\chi(g)}{\chi(1)} > 0 $$
where $\chi$ runs among the irreducible characters of $G$.
More generally, by \cite[Lemma~9.1]{Sha09},
$g$ can be written as the product of $k \geq 1$ commutators if and only if
$$ \sum_{\chi \in \Irr(G)} \frac{\chi(g)}{\chi(1)^{2k-1}} > 0 . $$
Therefore, because $|\chi(g)| \leq \chi(1)$ for all $\chi \in \Irr(G)$,
for the thesis to hold it is sufficient to show that
$$ \sum_{\chi \in \Irr(G) \setminus 1} \chi(1)^{2-2k} < 1 $$
for some $k \leq \varepsilon^{-1}$.
Since there are less than $|G|$ irreducible characters, for $k \geq 2$ the $\varepsilon$-quasirandomness gives
$$ \sum_{\chi \in \Irr(G) \setminus 1} \chi(1)^{2-2k} \leq 
\sum_{\chi \in \Irr(G) \setminus 1} |G|^{-2\varepsilon (k-1)} < 
|G|^{1 -2\varepsilon (k-1)} . $$
If $k = \lfloor \varepsilon^{-1} \rfloor$ then $k-1 \geq (2 \varepsilon)^{-1}$ and we are done
(recall that $\varepsilon < 1/2$).
\end{proof}

It is well known that a much stronger result holds for the finite simple groups:
by the famous Ore's conjecture, every element is a commutator \cite{LOST10}.
Anyway, it is notable that the proof of Lemma~\ref{lemWid} is elementary.

We are ready to prove our diameter bound for quasirandom groups.
We proceed along the derived series of the solvable radical,
taking advantage of Proposition~\ref{propDL}.

\begin{proof}[Proof of Theorem~\ref{thDiam}] 
Let $R=\Sol(G)$, and let $\ell \geq 0$ be the derived length of $R$.
By Proposition~\ref{propDL} we have $\ell \leq 2 \varepsilon^{-1}$, and we proceed by induction on $\ell$.
If $\ell=0$, then $R$ is trivial and by Theorem~\ref{thBS} we have that
$G$ is the direct product of boundedly many finite simple groups of Lie type of bounded rank,
say $G= \prod_{i=1}^n T_i$ with $n \ll_\varepsilon 1$.
By iterating Lemma~\ref{lemDiamTools}(iii) and \cite{BGT11,PS16} we have
$$ \diam(G) \leq \prod_{i=1}^n (2 \diam(T_i) +1) \leq (\log|G|)^{O_\varepsilon(1)} . $$
(A better estimate for $\diam(\prod_{i=1}^n T_i)$ was proven in \cite{Don22},
but we do not need that result.)

Now suppose $\ell \geq 1$ and let $A = R^{(\ell-1)} \lhd G$.
Note that $A$ is abelian and $G/A$ has derived length $\ell -1$.
By induction we have $\diam(G/A) \leq (\log|G|)^{O_\varepsilon(1)}$.
If $G = \langle S \rangle$ (we can assume that $S$ is symmetric),
by Lemma~\ref{lemDiamTools}(ii) we have
$$ \diam(G,S) \leq \ell_S([A,G]) + \ell_S(G/[A,G]) . $$
By Lemma~\ref{lemDiamTools}(iii)
we obtain $A=\langle X \rangle$ where $\ell_S(X) \leq (\log|G|)^{O_\varepsilon(1)}$.
Since $A$ is abelian and $\ell_S(G/A) \leq (\log|G|)^{O_\varepsilon(1)}$,
the normal closure $X^G$ still satisfies $\ell_S(X^G) \leq (\log|G|)^{O_\varepsilon(1)}$
and clearly is a $G$-invariant symmetric generating set for $A$.
Applying Lemma~\ref{lemACT} iteratively with $X^G,(X^G)^3,\ldots,(X^G)^{3^i}$,
we obtain $\ell_{X^G}([A,G]) \leq (\log|[A,G]|)^{O_\varepsilon(1)}$.
By Lemma~\ref{lemDiamTools}(i) we have
$$ \ell_S([A,G]) \leq \ell_S(X^G) \cdot \ell_{X^G}([A,G]) \leq (\log|G|)^{O_\varepsilon(1)} . $$

Let $\overline{G}=G/[A,G]$ and $\overline{A}=A/[A,G]$, so that it remains to bound $\ell_S(\overline{G})$.
Observe that $\overline{A} \subseteq Z(\overline{G})$.
By Lemma~\ref{lemWid}, every element of $\overline{G}$ is the product of at most $\varepsilon^{-1}$ commutators.
Moreover, the commutator map $\overline{G} \times \overline{G} \to \overline{G}$ has the same image as the
induced map $G/A \times G/A \to \overline{G}$.
Up to replacing the constant $O_\varepsilon(1)$, we obtain
$$ \ell_S(\overline{G}) \leq 4 \varepsilon^{-1} \ell_S(G/A) \leq (\log|G|)^{O_\varepsilon(1)} $$
as desired.
\end{proof} 

\begin{remark}[Dependence on $\varepsilon$]
The exponent in Theorem~\ref{thDiam} has to depend on $\varepsilon$:
just take any perfect group with large diameter,
for example the deleted permutation module $(\mathbb{F}_p)^4 \rtimes \Alt(5)$ for large $p$.
The $O_\varepsilon(1)$ term can be computed explicitly,
but it obviously has to depend on what we are able to say when $G$ is simple.
In this case $G=T$ is a group of Lie type of rank $r=O(\varepsilon^{-1})$.
The current best known general bounds of the form $\diam(T) \leq (\log|T|)^{C(r)}$
are proven in \cite{HMPQ19}, but are considered far from the truth
(in particular the famous Babai's conjecture \cite{BS92} predicts that $C(r)$ can be chosen to be a constant).
\end{remark}

\vspace{0.1cm} 
\section{Expansion in quasirandom groups} \label{secExp}

In this section we prove Theorem~\ref{thProjection} and its consequences.

\subsection{Central extensions} 

Let $G$ be a finite perfect group and $S \subseteq G$.
It is a consequence of Remark~\ref{remZF}
that $\gap(G,S) >0$ if and only if $\gap(G/Z(G),S) >0$,
and we seek a quantitative strengthening of this fact.
For this purpose it is convenient to deviate from spectral gaps and use the \emph{Kazhdan constant}
(see \cite{BHV08} for an introduction to this parameter and its applications).
This is defined by
$$ \Kaz(G,S) \> := \> \inf_{\substack{\pi \colon G \to U(V) \\ V^G =0}} 
\inf_{\| {\bf v} \| =1} \max_{s \in S}  \| \pi(s) {\bf v} - {\bf v} \|^2 , $$
where the first infimum runs over all unitary representations $\pi \colon G \to U(V)$ without
non-zero $G$-invariant vectors, and the second infimum runs over all unit vectors ${\bf v} \in V$
(note that we consider squared norms for convenience).
The following inequalities are standard:
$$ \frac{\Kaz(G,S)}{2|S|} \> \leq \> \gap(G,S) \> \leq \> \frac{\Kaz(G,S)}{2} , $$
see \cite[Proposition~III(2)]{HRV93}.
Often it is also technically useful to work with a variation involving only irreducible representations:
$$ \Kazz(G,S) \> := \> \min_{\substack{\pi \colon G \to U(V) \\ \pi \neq 1, \text{ \tiny{irreducible}}}} 
\inf_{\| {\bf v} \| =1} \max_{s \in S}  \| \pi(s) {\bf v} - {\bf v} \|^2 . $$
It follows from \cite[Proposition~III(1)]{HRV93} that
$$ \Kaz(G,S) \> \leq \> \Kazz(G,S) \> \leq \> |S| \Kaz(G,S) . $$
(Decomposing $(\pi,{\bf v})$, the right-hand inequality can be obtained
by collecting the irreducible representations giving the same $s \in S$ in the definition of $\Kazz(G,S)$.)
The next result provides the desired improvement of Remark~\ref{remZF}.
We were inspired by the proof of \cite[Lemma~1.7.10]{BHV08},
which the authors attribute to Serre.

	\begin{lemma} \label{lemKazZ}
       Let $G$ be a perfect group and $S \subseteq G$.
       Then
       $$ \Kazz(G,S) \> \geq \> \tfrac{1}{4} \Kaz(G/Z(G),S) . $$
       \end{lemma}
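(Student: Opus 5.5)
The plan follows Serre's trick from \cite[Lemma~1.7.10]{BHV08}. Starting from an irreducible representation of $G$, we pass to the conjugation representation on its endomorphisms. The center acts trivially there, so it becomes a representation of $G/Z(G)$.

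Fix a nontrivial irreducible unitary representation $\pi\colon G\to U(V)$ with $n=\dim V$, and a unit vector $\mathbf v\in V$. Since $G$ is perfect, every one-dimensional representation is trivial, so $n\ge 2$. By Schur's lemma $Z(G)$ acts on $V$ by scalars. Consider $\rho=\pi\otimes\bar\pi$, which we identify with the conjugation action $T\mapsto \pi(g)T\pi(g)^{-1}$ on $\End(V)$ with the Hilbert--Schmidt inner product. Scalars commute with everything, so $Z(G)\subseteq\ker\rho$ and $\rho$ factors through $G/Z(G)$. Again by Schur's lemma, the $G$-invariant vectors of $\rho$ are exactly the multiples of the identity $I$. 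Let $\rho_1$ be the restriction of $\rho$ to the orthogonal complement $I^\perp$ of the trace-zero operators; it has no nonzero invariant vectors, so it is admissible in the definition of $\Kaz(G/Z(G),S)$.

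Take $\mathbf w=P_{\mathbf v}$, the orthogonal projection onto $\mathbb C\mathbf v$, which is a unit vector in Hilbert--Schmidt norm. Decompose $\mathbf w=\mathbf w_0+\mathbf w_1$ with $\mathbf w_0=\langle P_{\mathbf v},I\rangle I/n=I/n$ and $\mathbf w_1\in I^\perp$. Then $\|\mathbf w_0\|^2=1/n$, so $\|\mathbf w_1\|^2=1-1/n\ge 1/2$. Because $\mathbf w_0$ is invariant, $\rho(s)\mathbf w-\mathbf w=\rho_1(s)\mathbf w_1-\mathbf w_1$. Applying the definition of the Kazhdan constant to $\mathbf w_1/\|\mathbf w_1\|$ gives
$$\max_{s\in S}\|\rho(s)\mathbf w-\mathbf w\|^2\ \ge\ \Kaz(G/Z(G),S)\,\|\mathbf w_1\|^2\ \ge\ \tfrac12\Kaz(G/Z(G),S).$$

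It remains to compare $\|\rho(s)\mathbf w-\mathbf w\|$ with $\|\pi(s)\mathbf v-\mathbf v\|$, and the constant $\frac14$ requires the sharp comparison. The crude tensor triangle inequality only gives a factor $2$ in the norm and hence $\frac18$. Instead, note $\rho(s)P_{\mathbf v}=P_{\pi(s)\mathbf v}$, and for unit vectors $a,b$ with $c=|\langle a,b\rangle|\le1$,
$$\|P_a-P_b\|_{HS}^2=2-2c^2=2(1-c)(1+c)\le 4(1-c)\le 4(1-\mathrm{Re}\langle a,b\rangle)=2\|a-b\|^2.$$
Hence $2\max_s\|\pi(s)\mathbf v-\mathbf v\|^2\ge \frac12\Kaz(G/Z(G),S)$. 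Taking the infimum over $\mathbf v$ and the minimum over $\pi$ gives $\Kazz(G,S)\ge\frac14\Kaz(G/Z(G),S)$.

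The main obstacle is making sure the constant is exactly $\frac14$. This rests on two points: perfectness, which forces $n\ge2$ and hence $\|\mathbf w_1\|^2\ge\frac12$, and the sharp projection estimate $\|P_a-P_b\|^2\le2\|a-b\|^2$.
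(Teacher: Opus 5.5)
Your proof is correct and follows the same route as the paper. Both use Serre's trick: pass to $\pi\otimes\overline{\pi}$, which you realize as the conjugation action on $\End(V)$ and which factors through $G/Z(G)$. Both then split off the unique invariant line, getting $\|\mathbf w_1\|^2 = 1-1/n \ge 1/2$ because perfectness forces $n\ge 2$, and both finish with the comparison $\|\rho(s)\mathbf w-\mathbf w\|^2 \le 2\|\pi(s)\mathbf v-\mathbf v\|^2$.

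The one real difference is that you prove this factor-$2$ comparison yourself, via $\|P_a-P_b\|_{\mathrm{HS}}^2 = 2-2|\langle a,b\rangle|^2$, whereas the paper simply cites \cite[Lemma~1.7.10]{BHV08} for it. Your trace computation of the projection of $P_{\mathbf v}$ onto $\mathbb{C} I$ is also cleaner than the paper's coordinate computation.
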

       \begin{proof}
       We have to show that for every nontrivial irreducible representation
       $\pi \colon G \to U(V)$ and unit vector ${\bf v} \in V$
       there is $x \in S$ such that
       $$ \| \pi(x) {\bf v} - {\bf v} \|^2 \geq \tfrac{1}{4} \Kaz(G/Z(G),S) . $$
       Let $\pi \colon G \to U(V)$ be a nontrivial irreducible representation of degree $n$.
       Since $G$ is perfect, we have $n \geq 2$.
       From Schur's lemma, the elements of $Z(G)$ act as scalars, so the tensor product
       $\pi \otimes \overline{\pi} \colon G \to U(V \otimes \overline{V})$
       factorizes as a $G/Z(G)$ representation of degree $n^2$.
       For every $x \in G$ and unit vector ${\bf v} \in V$, we have
      $$
       \| \pi(x) {\bf v} - {\bf v} \|^2 \> \geq \>
       \tfrac{1}{2} \| (\pi \otimes \overline{\pi})(x) ({\bf v} \otimes {\bf \overline{v}}) - ({\bf v} \otimes {\bf \overline{v}}) \|^2 .
       $$
       (We refer to \cite[Lemma~1.7.10]{BHV08} for more explanation on this inequality.)
       
       One issue is that $\pi \otimes \overline{\pi}$ does have nontrivial invariant vectors.
       However, because $\pi$ is irreducible,
       the multiplicity of the trivial representation in $\pi \otimes \overline{\pi}$ is just $1$.
       Let $\mathcal{Z} \subset V$ be the invariant line,
       and let $\mathcal{Z}^\perp$ be its orthogonal subspace.
       Let us write ${\bf v} \otimes {\bf \overline{v}} = {\bf w}_1 + {\bf w}_2$,
       with ${\bf w}_1 \in \mathcal{Z}$ and ${\bf w}_2 \in \mathcal{Z}^\perp$.
       By the invariance of ${\bf w}_1$ we have
       $$ \| (\pi \otimes \overline{\pi})(x) ({\bf v} \otimes {\bf \overline{v}}) - ({\bf v} \otimes {\bf \overline{v}}) \|^2  =
       \| (\pi \otimes \overline{\pi})_{\mathcal{Z}^\perp}(x) {\bf w}_2 - {\bf w}_2 \|^2 . $$
       Now $(\pi \otimes \overline{\pi})_{\mathcal{Z}^\perp}$ has no nontrivial invariant vectors,
      and it remains to estimate $\| {\bf w}_{_2} \|$.
     Since ${\bf w}_1$ and ${\bf w}_2$ are orthogonal we have
      $1 = \| {\bf v} \otimes {\bf \overline{v}} \|^2 = 
      \| {\bf w}_1 \|^2 + \| {\bf w}_2 \|^2$.
      Fix a basis ${\bf e}_1,\ldots,{\bf e}_n$ for $V$,
      so that $\mathcal{Z}$ is generated by $\sum_i {\bf e}_i \otimes \overline{{\bf e}_i}$.
       Write ${\bf v}=\sum_i a_i {\bf e}_i$, so that
       $$ {\bf v} \otimes {\bf \overline{v}} = \sum_{i,j} (a_i a_j) \> {\bf e}_i \otimes \overline{{\bf e}_j} . $$
       Since ${\bf w}_1$ is the projection of ${\bf v} \otimes {\bf \overline{v}}$ on $\mathcal{Z}$,
       and $\| \sum_{i} {\bf e}_i \otimes \overline{{\bf e}_i} \| = \sqrt{n}$,
       we have
       \[\begin{split}
		\sqrt{n} \cdot \| {\bf w}_1 \|
		&= \langle {\bf v} \otimes {\bf \overline{v}} , \sum_{i} {\bf e}_i \otimes \overline{{\bf e}_i} \rangle
		\\&= \sum_i \langle \sum_{h,k} a_ha_k ({\bf e}_k \otimes \overline{{\bf e}_k}) , {\bf e}_i \otimes \overline{{\bf e}_i} \rangle
		\\&= \sum_i \langle (a_i)^2 ({\bf e}_i \otimes \overline{{\bf e}_i}) , {\bf e}_i \otimes \overline{{\bf e}_i} \rangle
		\\&= \sum_i (a_i)^2 = \| {\bf v} \|^2 = 1 .
	\end{split} \, \] 
       It follows that $\| {\bf w}_1 \|^2 = 1/n \leq 1/2$, and then $\| {\bf w}_2 \|^2 \geq 1/2$.
       
      Finally, by definition of $\Kaz(G/Z(G),S)$ there exists $x \in S$ such that
      $$ \| (\pi \otimes \overline{\pi})_{\mathcal{Z}^\perp} (x) {\bf w}_2 - {\bf w}_2 \|^2 \geq
       \tfrac{1}{2} \Kaz(G/Z(G),S) , $$
    and the proof is complete.
       \end{proof}

We can easily express Lemma~\ref{lemKazZ} in terms of spectral gaps.

\begin{corollary} \label{corExpC}
Let $G$ be a finite perfect group and $S \subseteq G$.
Then
$$ \gap(G,S) \> \gg_{|S|} \> \gap(G/Z(G),S) . $$
\end{corollary}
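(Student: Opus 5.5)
The plan is to chain the standard comparisons between spectral gap and Kazhdan constants with Lemma~\ref{lemKazZ}. The chain has one subtlety: the gap is controlled by $\Kaz(G,S)$, whereas Lemma~\ref{lemKazZ} bounds $\Kazz(G,S)$. I first use the left-hand inequality
$$ \gap(G,S) \> \geq \> \frac{\Kaz(G,S)}{2|S|} , $$
and then $\Kaz(G,S) \geq |S|^{-1}\Kazz(G,S)$, which follows from the inequality $\Kazz \leq |S|\Kaz$ recalled before Lemma~\ref{lemKazZ}. Together these give $\gap(G,S) \geq \Kazz(G,S)/(2|S|^2)$.

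Next I apply Lemma~\ref{lemKazZ}, which requires only that $G$ be perfect:
$$ \Kazz(G,S) \> \geq \> \tfrac14 \Kaz(G/Z(G),S) . $$
Finally I return to spectral gaps in the quotient. The right-hand standard inequality, applied to $G/Z(G)$ with the projected multiset $S$, gives $\Kaz(G/Z(G),S) \geq 2\gap(G/Z(G),S)$. Combining the three steps,
$$ \gap(G,S) \> \geq \> \frac{1}{4|S|^2}\,\gap(G/Z(G),S) , $$
which is the claim with implied constant depending only on $|S|$.

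I expect no step to be a serious obstacle. The only points to check are bookkeeping ones. First, the comparison inequalities from \cite{HRV93} must apply to the projected multiset in $G/Z(G)$, whose cardinality is still $|S|$. Second, the degenerate case must behave correctly. If $S$ does not generate $G$, Remark~\ref{remZF} shows that its image does not generate $G/Z(G)$ either, so the right-hand side is $0$ and the inequality is trivial. In any case the chain of inequalities holds without a generation hypothesis.
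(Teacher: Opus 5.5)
Your proof is correct and follows the same chain as the paper: $\gap(G,S) \geq \Kaz(G,S)/(2|S|) \geq \Kazz(G,S)/(2|S|^2)$, then Lemma~\ref{lemKazZ}, then $\Kaz(G/Z(G),S) \geq 2\gap(G/Z(G),S)$. Your last step is slightly tighter than the paper's (which settles for $\gap(G/Z(G),S)/8$), so you get the explicit constant $1/(4|S|^2)$.
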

\begin{proof}
By the previous discussion we have
$$ \gap(G,S) \geq \frac{\Kaz(G,S)}{2|S|} \geq \frac{\Kazz(G,S)}{2|S|^2} . $$
Hence, by Lemma~\ref{lemKazZ},
$$ \Kazz(G,S) \geq \frac{\Kaz(G/Z(G),S)}{4} \geq \frac{\gap(G/Z(G),S)}{8} . \qedhere $$
\end{proof}

\subsection{Proof of Theorems~\ref{thProjection} and~\ref{thExpRan}} 

We can now put the ingredients together.

\begin{proof}[Proof of Theorem~\ref{thProjection}]
Let $G = \langle S \rangle$.
We will first prove that
\begin{equation} \label{eqProj}
\gap(G,S) \> \gg_{\varepsilon,|S|} \> \gap(G/\Sol(G),S) . 
\end{equation} 
We implement the same process as in the proof of Lemma~\ref{lemQGen},
where at each step we quotient the group by its center
and then by a non-central minimal abelian normal subgroup.
As before, we terminate in less than $\varepsilon^{-1}$ steps.
First, we can handle central extensions with Corollary~\ref{corExpC}.
Replace $G$ with $G/Z(G)$ and let $A \lhd G$ be a minimal abelian normal subgroup.
By minimality we have $[A,G]=A$ and $\langle a^{G} \rangle =A$ for each $a \in A \setminus 1$.
By applying Lemma~\ref{lemACT} iteratively, with $X=(a^{\pm 1})^{G}$, we get
$$ \langle a^{G} \rangle = ((a^{\pm 1})^{G})^{O_\varepsilon(1)} . $$
Thus we can use Theorem~\ref{thGSAbe} and the proof of~(\ref{eqProj}) is complete.

Now we can replace $G$ with $G/\Sol(G)$.
We can assume that $G$ is sufficiently large with respect to $\varepsilon$, so
by Theorem~\ref{thBS}, $G = \prod_{i=1}^n T_i$
for some $T_i$'s finite simple groups of Lie type of $\varepsilon$-bounded rank and $n \ll_\varepsilon 1$.
Let $P_k = \prod_{i=1}^{k} T_i$.
For each $1 \leq k \leq n-1$, apply Theorem~\ref{thGSDirPro} with $G_1=P_k$ and $G_2=T_{k+1}$.
For each $T_i$, the necessary condition on conjugacy classes is guaranteed by \cite[Theorem~1]{LL98},
and the case of a direct product follows easily.
\end{proof}

The fact that $G$ is $\varepsilon$-quasirandom is crucial in Theorem~\ref{thProjection}.
In fact, if $G$ is any group with a unique maximal normal subgroup of bounded index,
then it is easy to make the unique simple quotient expand without giving significant information on $G$.
Easy examples are the cyclic group $C_{2^n}$ and the deleted permutation module
$(\mathbb{F}_p)^4 \rtimes \Alt(5)$ for large $p$.
We now give an example of a perfect group $G=V \rtimes H$ with $G = \langle S \rangle$,
where $\Cay(H,S)$ is an expander, $\log|V|=o(\log|H|)$ and $H$ acts irreducibly on $V$,
but $\Cay(G,S)$ is not an expander.

\begin{example} \label{exBad}
Let $H=\Alt(n)$ for $n$ odd and let $G = V \rtimes H$ be the deleted permutation module
with $V \cong (\mathbb{F}_2)^{n-1}$, where $V$ is naturally embedded in $(\mathbb{F}_2)^n$.
By Kassabov \cite{Kas07} there is a bounded-size set $T \subseteq H$ such that $\Cay(H,T)$ is an expander.
However, if ${\bf v} = (1,1,0,\ldots,0) \in V$ and
$$ S = \{ {\bf v}, T \} \subseteq G , $$
then $\Cay(G,S)$ is not an expander.
To see this, the easiest way is to note that the \emph{vertex expansion ratio} of $\Cay(G,S)$ goes to zero
(see \cite[Definition~1.1]{Lub12}).
In fact, the subset
 $$ A = \{ \left( {\bf a} , \sigma \right) : {\bf a}_{\sigma^{-1}(1)}=0 \} \subseteq G $$
 has size $|G|/2$, is invariant under multiplication by $H$,
 and almost invariant under multiplication by ${\bf v}$.
For $({\bf a},\sigma) \in A$ and $\tau \in H$, we have
$$ \tau \cdot ({\bf a},\sigma) = (  {\bf a}^\tau , \tau \sigma ) \in A . $$
Moreover, we have
$$ {\bf v} \cdot ({\bf a},\sigma)=({\bf a}+{\bf v},\sigma), $$
so ${\bf v} \cdot ({\bf a},\sigma) \notin A$ if and only if
$$ ({\bf a}+{\bf v})_{\sigma^{-1}(1)}=1. $$
Since ${\bf a}_{\sigma^{-1}(1)}=0$, this is equivalent to ${\bf v}_{\sigma^{-1}(1)}=1$.
But ${\bf v}_i=1$ if and only if $i\in\{1,2\}$, so
$$
{\bf v}\cdot ({\bf a},\sigma)\notin A
\quad \Longleftrightarrow \quad
\sigma^{-1}(1)\in \{1,2\}.
$$
It follows that
$$
\frac{|S A \setminus A|}{|A|} =
\frac{|{\bf v} A \setminus A|}{|A|}
\leq \frac{2}{n} \to 0
$$
as $n \to \infty$.
\end{example}

To state our next result we need some notation concerning random walks in a Cayley graph $\Cay(G,S)$.
For each $g \in G$ and $n \geq 1$, we write
$$ \mu_{G,S}^{(n)}(g) \> := \> \Prob_{s_1,\ldots,s_n \in S \cup S^{-1}}( s_1 \cdots s_n = g ) . $$
It is easy to see that $\mu_{G,S}^{(2n)}(1) = \| \mu_{G,S}^{(n)} \|^2$,
and an application of the Cauchy--Schwarz inequality gives
\begin{equation} \label{eqRW}
\mu_{G,S}^{(2n)}(g) \> \leq \> \mu_{G,S}^{(2n)}(1) 
\end{equation}
for all $g \in G$ and $n \geq 1$.
This implies that $\mu_{G,S}^{(2n)}(1) \geq |G|^{-1}$ for all $n$.
Moreover,
it is well known that $\mu_{G,S}^{(2n)}(1) \to |G|^{-1}$ if $\Cay(G,S)$ is connected and not bipartite.
For $X \subseteq G$, let $\mu_{G,S}^{(n)}(X) = \sum_{x \in X} \mu_{G,S}^{(n)}(x)$.

As a consequence of Theorem~\ref{thProjection},
we are able to remove the product theorem (Stage 2)
from the hypotheses in the Bourgain--Gamburd expansion machine
(see \cite[Theorem~1.4.2]{Tao15}).
Since non-concentration in proper subgroups is necessary for expansion,
we actually obtain the following result:

\begin{corollary}[Bourgain--Gamburd equivalence] \label{corBGFinal}
Let $G$ be $\varepsilon$-quasirandom for some fixed $\varepsilon >0$ and let $S \subseteq G$.
Then $\Cay(G,S)$ is an expander \underline{if and only if} there exists $\tau>0$ such that
$$ \sup_{H<G} \mu_{G,S}^{(2n)}(H) \> \leq \> |G|^{-\tau} $$
for some $n \leq \tau^{-1} \log|G|$.
\end{corollary}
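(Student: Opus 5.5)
The proof splits into the two directions of the equivalence. The necessity part is routine spectral theory; the sufficiency part combines the Bourgain--Gamburd argument on simple quotients with Theorem~\ref{thProjection}.

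\emph{Necessity.} Suppose $\gap(G,S)\geq\delta$. Then $\Cay(G,S)$ is connected. We first pass to the lazy walk, or to $S\cup\{1\}$, to avoid bipartiteness; I expect this to be harmless. We then use the standard $\ell^2$ mixing estimate
$$ \Bigl|\mu^{(2n)}_{G,S}(g)-|G|^{-1}\Bigr|\leq (1-\delta)^{2n}. $$
Summing over $H<G$ gives
$$ \mu^{(2n)}(H)\leq |H|/|G|+|H|(1-\delta)^{2n}. $$
By quasirandomness, $|H|<|G|^{1-\varepsilon}$, as noted after Corollary~\ref{corQChief}. Taking $n=C\log|G|$ with $C=C(\delta)$ then yields the bound $|G|^{-\tau}$ for some $\tau(\varepsilon,\delta)>0$.

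\emph{Sufficiency.} The non-concentration hypothesis with $H=\langle S\rangle$ forces $G=\langle S\rangle$. Indeed, otherwise $\mu^{(2n)}(H)=1$, which is impossible since $|G|^{-\tau}<1$. By Theorem~\ref{thProjection} it then suffices to bound $\gap(G/N,S)$ from below uniformly over maximal normal subgroups $N$. Here $G/N=T$ is an $\varepsilon$-quasirandom simple group with $|T|\geq|G|^\varepsilon$. The non-concentration hypothesis descends to $T$. For a proper subgroup $\overline H<T$ with preimage $H<G$, we have
$$ \mu^{(2n)}_{T,S}(\overline H)=\mu^{(2n)}_{G,S}(H)\leq |G|^{-\tau}\leq |T|^{-\tau}, $$
and $n\leq\tau^{-1}\log|G|\leq(\tau\varepsilon)^{-1}\log|T|$. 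Thus $T$ satisfies the non-concentration stage with parameter $\tau'=\tau\varepsilon$.

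We then run the Bourgain--Gamburd machine on $T$. Stage~1 (quasirandomness, via the Sarnak--Xue trick) holds with exponent $\varepsilon$. Stage~2, the product theorem, is available for finite simple groups of Lie type of bounded rank by \cite{BGT11,PS16}. For large $T$ this is exactly the class of $\varepsilon$-quasirandom simple groups, as in the proof of Theorem~\ref{thBGGT}. Stage~3 is the non-concentration just verified. \cite[Theorem~1.4.2]{Tao15} then gives $\gap(T,S)\geq\delta(\varepsilon,\tau,|S|)$. Small $T$ are handled trivially, since $T$ is generated by $S$ and bounded in size. Combining this with Theorem~\ref{thProjection} gives $\gap(G,S)\gg_{\varepsilon,\tau,|S|}1$.

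The main obstacle is checking that the hypotheses of Tao's formulation match exactly after projection. Tao's theorem needs non-concentration at a scale $n\asymp\log|T|$, possibly with an upper and lower range, and requires $S$ to be symmetric of bounded size. There is also the subtlety that the projection of $S$ is a multiset, which only affects constants. I would also verify that the product theorem is used uniformly for generating sets of $T$ as stated in \cite{BGT11,PS16}. This is precisely why the product theorem only needs to hold for the simple quotients rather than for $G$.
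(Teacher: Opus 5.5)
Your proposal is correct and follows the paper's proof in both directions. For sufficiency you likewise deduce $G=\langle S\rangle$, push non-concentration down to each simple quotient $G/N$ with $\tau'=\tau\varepsilon$, run the Bourgain--Gamburd machine with the product theorem of \cite{BGT11,PS16}, and conclude via Theorem~\ref{thProjection}; for necessity, your pointwise mixing bound plays the same role as the paper's use of (\ref{eqRW}) together with $|H|<|G|^{1-\varepsilon}$.

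One point needs tidying: replacing the walk by the lazy walk, or $S$ by $S\cup\{1\}$, changes the measure $\mu_{G,S}^{(2n)}$ in the statement, so it does not prove the claim as written. You should instead control the bottom of the spectrum of $\Cay(G,S)$ itself. This works because a perfect group has no index-$2$ subgroup, so a one-sided gap gives a two-sided one with constants depending on $|S|$. The paper also leaves this point implicit.
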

\begin{proof}
Since $\mu_{G,S}^{(2n)}(\langle S \rangle) =1$ for all $n$,
non-concentration in proper subgroups obviously implies that $S$ is a generating set.
Let $N \lhd G$ be a maximal normal subgroup
and observe that $G/N$ is an $\varepsilon$-quasirandom simple group,
thus a simple group of Lie type of rank $O(\varepsilon^{-1})$.
Considering the random walk on $\Cay(G/N,S)$, for $N<H<G$ we have
$$ \mu_{G/N,S}^{(2n)}(H/N) = \mu_{G,S}^{(2n)}(H) \leq |G|^{-\tau} \leq |G/N|^{-\tau} $$
for some $n \leq \tau^{-1} \log|G|$ .
Since $|G/N| \geq |G|^\varepsilon$,
we have $\log|G| \leq \varepsilon^{-1} \log|G/N|$
and non-concentration holds in $G/N$.
By the classical Bourgain--Gamburd machine \cite[Theorem~1.4.2]{Tao15} and the product theorem \cite{BGT11,PS16},
we obtain that $\Cay(G/N,S)$ is a $\delta$-expander for some $\delta >0$ depending only on $\varepsilon$ and $\tau$.
Because $N$ is arbitrary, the claim follows by Theorem~\ref{thProjection}.

Conversely, if $\gap(G,S) \geq \delta$ for some $\delta>0$,
then $\mu_{G,S}^{(2n)}(1) \leq |G|^{-1} + |G|^{-10}$ for some $n \ll_\delta \log|G|$
(see \cite[Exercise~1.1.16]{Tao15}).
Thus it is easy to see that $\mu_{G,S}^{(2n)}(1) \leq |G|^{-1 +\delta'}$ holds for $n \ll_{\delta,\delta'} \log|G|$,
where $\delta'>0$ can be made arbitrarily small.
Since $|H| < |G|^{1-\varepsilon}$ for every proper subgroup $H$ by quasirandomness,
combining with~(\ref{eqRW}) we obtain
$$ \mu_{G,S}^{(2n)}(H) \leq |H| \, \mu_{G,S}^{(2n)}(1) < |G|^{\delta' - \varepsilon} $$
for some $n \ll_{\delta,\delta'} \log|G|$.
Hence non-concentration is satisfied by choosing $\delta' < \varepsilon$.
\end{proof}

To conclude, we prove that
$1 + \lfloor \varepsilon^{-1} \rfloor$ random elements give an expander with high probability.

\begin{proof}[Proof of Theorem~\ref{thExpRan}]
Let $S \subseteq G$ be a subset of $1 + \lfloor \varepsilon^{-1} \rfloor$ random elements.
Then with high probability $G=\langle S \rangle$ by Proposition~\ref{propRanGen}.
The proof follows by Theorems~\ref{thProjection} and~\ref{thBGGT},
since random elements in $G$ project to random elements in each quotient.
\end{proof}


\vspace{0.1cm} \noindent
{\bfseries Acknowledgments:}
I am indebted to Sean Eberhard,
in particular for making me realize the validity of Lemma~\ref{lemWid}.
I am also grateful to Andrea Lucchini and Laci Pyber for helpful conversations.

   \vspace{0.2cm}

\end{document}